\newtheorem{proposition}{Proposition}
\newtheorem{remark}{Remark}
\title{Smooth and flexible dual optimal inequalities}
\author{Naveed Haghani\textsuperscript{\rm 1,\rm 3}, Claudio Contardo\textsuperscript{\rm 2},  Julian Yarkony\textsuperscript{\rm 3}\\[2ex] 
\textsuperscript{\rm 1}University of Maryland, College Park, MD\\ 
\textsuperscript{\rm 2}ESG UQAM and GERAD, Montreal, Canada\\ 
\textsuperscript{\rm 3}Verisk Computational and Human Intelligence Laboratory, Jersey City, NJ\\ 
}\date{December 2019}
\begin{document}

\maketitle
\begin{abstract}
 We address the problem of accelerating column generation (CG) for set-covering formulations via dual optimal inequalities (DOI). DOI use knowledge of the dual solution space to derive inequalities that might be violated by intermediate solutions to a restricted master problem, and as such are efficient at reducing the number of iterations and the oscillations of the dual variables commonly observed in column generation procedures. We study two novel classes of DOI which are referred to as Flexible DOI (F-DOI) and Smooth-DOI (S-DOI), respectively (and jointly as SF-DOI). F-DOI provide rebates for covering items more than necessary. S-DOI describe the payment of a penalty to permit the under-coverage of items in exchange for the over-inclusion of other items. Unlike other classes of DOI from the literature, the S-DOI and F-DOI rely on very little to no problem-specific knowledge, and as such have the potential to be applied to a vast number of problem domains. In particular, we illustrate the efficiency of the new inequalities by embedding them within a column generation solver for the single source capacitated facility location problem (SSCFLP). A speed-up of a factor of up to 130$\times$ can be observed as when compared to a non-stabilized variant of the same CG procedure to achieve the linear relaxation lower bound on problems with dense columns and structured assignments costs.
\end{abstract}
\section{Introduction}
Column generation (CG) \citep{cuttingstock,lubbecke2005selected,mwspJournal} is a core technique to solve linear programs (LP) with a large number of variables. CG mimics the simplex method by adding a pricing step (just like in the simplex method) to work with a reduced basis and enlarge it only as needed. CG has proven particularly efficient when the underlying mathematical formulation is the result of applying Dantzig-Wolfe decomposition to a compact formulation, in such a way that some complicating constraints are moved to a subproblem with some rich structure. Indeed, it is known that when the pricing subproblem is an integer program without the integrality property \citep{geoffrion1974lagrangean}, the resulting dual bounds may be strictly better than those associated with the original compact formulation of the problem. That is the case of some remarkable applications in logistics and transportation like vehicle routing \citep{costa2019} and facility location \citep{diaz2002branch}.  


CG has been applied historically with great success in a vast number of applications:  stock (material) cutting \citep{cuttingstock}, vehicle routing \citep{costa2019}, crew scheduling \citep{barnprice}, and facility location \citep{barahona1998plant}, among others.  Recently, CG has been applied to computer vision problems including image segmentation \citep{HPlanarCC}, multi-object tracking \citep{wang2017tracking,branchimportant}, and multi-person pose estimation \citep{wang2018accelerating}. When embedded within a branch-and-bound framework, CG becomes branch-and-price, an exact method that guarantees the finding of integer solutions.

Dual optimal inequalities \citep[DOI,][]{mwspJournal, ben2006dual, Gschwind2016Dual} are a core tool to accelerate CG. DOI reduce the dual feasible solution space and thus correspond to relaxations of the constraints in the primal. The dual space is reduced such that it contains at least one dual optimal solution of the non-reduced space. DOI are constructed to be provably inactive at termination of CG. In the primal problem, DOI permit the violation of constraints often in exchange for a penalty. DOI are associated with a repair procedure that maps any solution violating the original primal constraints to one that does not. The solution produced by the repair procedure has no greater cost than that of the original solution.

Typically, DOI exploit the structure of the underlying problems to derive conditions (cuts) satisfied by the dual variables in an optimal solution. For instance, in bin packing and cutting stock items of equal weight/size are indistinguishable. Thus, DOI exploit this knowledge to derive inequalities that impose this equivalence between indistinguishable items in the dual space \citep{ben2006dual, Gschwind2016Dual}. It has not been until recently that \citet{FlexDOIArticle} introduced a new class of DOI for a general class of weighted set-packing problems arising in computer vision called Flexible dual-optimal inequalities (F-DOI). In the primal problem F-DOI permit the over-covering of items in exchange for a penalty. In the dual problem the F-DOI bind the dual variables to lie in a hypercube. The penalty for over-covering an item $v$ with a column $l$ is an upper bound on the increase in cost induced by removing $v$ from any column consisting of a subset of the items of $l$ including $v$.
\section{Our contributions}
Our contributions expand upon previous work on DOI (particularly F-DOI). Most notably, this article contributes to narrowing the gap of applicability of general-purpose DOI by proposing two novel classes of DOI for general set-covering problems. Our contributions can be summarized as follows:
\begin{enumerate}
	\item We introduce a novel class of dual optimal inequality, namely the Smooth-DOI (S-DOI). In the primal problem, S-DOI permit the under-covering of items in exchange for the over-covering of other items plus a penalty. Each individual S-DOI is associated with a pair of items $(u, v)$.  The penalty associated with a given S-DOI for items $(u, v)$ is an upper bound on the increase in cost induced by replacing $u$ in a column not including $v$ with $v$. In the dual problem the S-DOI produce a smoothing in the oscillation behavior of the dual vector, thus the motivation for calling them \textit{Smooth DOI}.
	\item We adapt the F-DOI of \citet{FlexDOIArticle} to weighted set-covering formulations. In our case the F-DOI induce a reward for over-including an element $v$ using column $l$. This reward is a lower bound on the improvement of the objective induced by removing $v$ from any column constructed from a subset of the items of $l$ including $v$.
	\item We provide a unified framework for the simultaneous addition of S-DOI and F-DOI. The resulting inequalities are referred to as \textit{Smooth-Flexible DOI} and denoted SF-DOI.
	\item We propose an efficient implementation for the SF-DOI (and therefore for the S-DOI and F-DOI separately as well), which allow us to control the growth of the resulting linear programs, and to perform pricing in a minimally invasive manner. In fact, we show that the same pricing problem used for a non-stabilized CG can also be applied within the new framework.
	\item We illustrate the efficiency of our stabilization schemes with an application to the single-source capacitated facility location problem (SSCFLP), and show using an exhaustive computational campaign that the new classes of DOI entail substantial improvements in terms of number of iterations and of CPU time required to achieve the linear relaxation lower bounds, as compared to a non-stabilized implementation of the same algorithm.
\end{enumerate}
We outline this document as follows. In Section \ref{section:litrev} we review the most relevant literature in dual stabilization applied to decomposition methods, with an emphasis in CG and DOI. In Section \ref{section:stdcg} we present a standard CG method using a set-covering formulation. In Sections \ref{section:sdoi}-\ref{section:sfdoi} we present the S-DOI, F-DOI and the SF-DOI. In Section \ref{section:effpricing} we consider the problem of pricing in the presence of the new DOI, and show that it can be performed as usual by ignoring the impact of the new DOI in the pricing without compromising the correctness of the method. In Section \ref{section:sscflp} we present a case study by applying the SF-DOI to SSCFLP. In Section \ref{section:results} provide empirical results on SSCFLP. We conclude in Section \ref{section:conclusions}. 

\section{Literature review\label{section:litrev}}

In this section we review the related work on dual stabilization so as to provide context for our contributions. Dual stabilization approaches can be understood as imposing a Bayesian prior on the values of the dual variables and can be classified as trust region based methods, smoothing methods, interior-point methods and dual-optimal inequality based methods. Trust region based methods \citep{marsten1975boxstep,du1999stabilized} prevent or discourage the dual solution from departing a window around the current optimal dual solution. They often require an explicit change in the underlying optimization models and rely on periodic updates of the stabilization parameters, without which the methods are not guaranteed to converge to a proven dual optimum. Interior point methods \citep{Mitchell1994Interior, rousseau2007interior} average across multiple dual optimal solutions to produce well centered solutions. Smoothing methods \citep{Pessoa2018Automation}, on the other hand, do not impose any explicit extra conditions in the models, but rely on the computation of a smoothing dual center to decrease the $\ell_2$-norm of the dual solution used for pricing purposes. As in trust-region methods, a periodic update of the stabilization parameters is often required to ensure convergence. 
DOI based methods \citep{ben2006dual,FlexDOIArticle,Gschwind2017Stabilized}, in contrast, restrict the dual solution from leaving a subspace that provably contains at least one dual optimal solution.
In Sections \ref{section:lr:trust}-\ref{section:lr:smoothing} we provide a brief overview of the first three stabilization techniques. In Section \ref{section:lr:doi} we provide a detailed review of dual-optimal inequalities for CG and related literature. Finally, we  present in Section \ref{section:lr:benders} the application of dual stabilization to a method beyond CG, specifically the Benders decomposition \citep{benders1962partitioning}.

\subsection{Trust region based methods\label{section:lr:trust}}

One key example of trust region based methods is \citet{marsten1975boxstep}, which restricts the dual variables to lie in a box around the previous dual solution at each step of CG. This prevents the dual solution from rapidly traveling outside of the space where the current set of columns provide meaningful information concerning bounds on the dual variables. \citet{marsten1975boxstep} is further developed in \citet{du1999stabilized} where an $\ell_1$ penalty on the distance from a box around the tightest dual solution found thus far is imposed. Here the tightness of the bound is computed by adding negative reduced cost terms to the restricted master problem (RMP) objective so as to provide a lower bound on the optimal solution to the master problem (MP). A key problem with these approaches is the requirement of user defined hyper-parameters for optimization. Related to trust-region methods are Bundle methods \citep{Frangioni2002Generalized, Briant2008Comparison, Ackooij2018Incremental} that allow dual variables to leave the trust region by paying a penalty. Bundle methods make use of non-linear penalty functions (often quadratic) for the penalties, and have desirable convergence properties at the expense of larger computing times per iteration associated with the non-linear penalties.

\subsection{Interior point methods\label{section:lr:interior}}

Interior point methods are based on the assumption that, when multiple dual optima exist for a given problem, the average of those dual vectors tends to be more balanced and thus have lower $\ell_2$ norm. \citet{rousseau2007interior} exploit this observation by solving each iteration of the restricted master problem (RMP) multiple times, to hopefully find alternative dual optima. The average of those optima is used to execute the pricing. \citet{Gondzio2013new} introduce the primal-dual column generation method, which uses an interior point method to avoid solving the RMP to proven optimality at every iteration. Instead, the primal-dual method computes a well-centered primal-dual feasible set of vectors. The associated dual vector is then used as input to the pricing subproblem. Later, \citet{Gondzio2015new} proposed a warm-start strategy to accelerate the re-execution of the primal-dual method. The authors provide further evidence of the usefulness of their method later in \citet{Gondzio2016Large}.

\subsection{Smoothing methods\label{section:lr:smoothing}}

Smoothing in CG can be seen as the use of a dual solution of low $\ell_2$ norm  as input to the pricing sub-problems. Given a dual variable vector $\alpha$ of a linear program, the center $\alpha^0$ is a vector of the same dimensions as $\alpha$ but with a lower $\ell_2$ norm. Instead of using $\alpha$ directly to search for columns of negative reduced cost, one uses instead $\alpha^{\prime} = \lambda\alpha + (1 - \lambda)\alpha^0$, where $\lambda$ is a parameter used to control the smoothing of the vector $\alpha^{\prime}$. \citet{Pessoa2018Automation} presented the use of this technique and provide conditions guaranteeing a correct ---yet automatic--- update of the stabilization parameters to ensure convergence. Unlike trust-region methods, no changes in the structure of the optimization models is necessary along the execution of the algorithm, and therefore is much easier to implement than trust region methods, yet providing competitive and usually better convergence properties.

\subsection{Dual optimal inequalities\label{section:lr:doi}}

The work of \citet{ben2006dual} applies dual optimal inequalities (DOI) for CG. The authors provide conditions satisfied by the dual variables when they are associated to indistinguishable items in the primal, this is items that if swapped, result in a solution totally equivalent. In cutting stock and bin packing, this equivalence is measured according to the items' sizes or weights. Two items of identical weight/size are indistinguishable and the dual variables associated with their coverage restrictions can be assumed to be identical. This entails a series of identity equations in the dual space.  As a result, dual oscillation between indistinguishable items is totally eliminated and the resulting CG algorithms converge substantially faster in the presence of a large amount of such symmetries. The authors also show that in cutting stock/bin packing that for any pair of items $u,v$ ,where the size of $u$ is larger than that of $v$, that the dual variable corresponding to $u$ may be no smaller than that corresponding to $v$. The work of \citep{Gschwind2016Dual} extends this knowledge to problems with richer structure and less symmetries, such as bin packing with conflicts and vector packing.
%

Recently, DOI have made a notable contribution in computer vision applications of CG \citep{mwspJournal}. In the applications considered  by the authors, the fact the cost of a column is a sparse quadratic function of its members is exploited as its cost changes smoothly as observations are removed from it. This observation motivates the following class of DOI, namely the \textit{invariant DOIs}, which is briefly explained as follows. The dual value associated with a given item can be bounded by an easily computed upper bound on the cost increase observed when removing that item from the column. The authors expand on this with \textit{varying DOI} that make use of the current RMP solution to provide bounds that are tighter given the columns in the RMP. Invariant/Varying DOIs can be understood as relaxing the packing constraint and replacing it with a penalty for violating it.  

In the same line as the work of \citet{mwspJournal}, the work of \citet{FlexDOIArticle} introduced the \textit{flexible} DOI (F-DOI) for set packing, which further improves the varying DOIs and is the basis for our work. Varying DOI suffer from the fact that the penalty for using the associated primal variable depends only on the item being removed, and as such do not exploit any specific knowledge of the active columns in the RMP solution. F-DOI circumvent this issue by considering that the penalty may depend not only on the item, but also on the column, to compute bounds on the cost change when removing an item from that specific column.


DOI can also be found in (hierarchical) correlation clustering problems arising in image segmentation \citep{HPlanarCC,PlanarCC}. The authors report that in the absence of dual stabilization, CG suffers from severe convergence issues. In the presence of DOI, on the other hand, convergence is achieved several orders of magnitude faster. In \citet{PlanarCC} DOIs are able to bound most of the dual variables to be zero for real problem instances. This is not the case in \citet{HPlanarCC}, which is closely related to \citet{PlanarCC}, implying that it is not only the restriction of the dimensionality of the dual space which provides for the effectiveness of DOIs in \citet{PlanarCC}. 
%
%
\subsection{Dual stabilization in Benders decomposition\label{section:lr:benders}}

Dual stabilization is not only a relevant issue in CG. It also occurs in the domain of Benders decomposition \citep{benders1962partitioning} as a part of Magnanti-Wong cuts \citep{magnanti1981accelerating}.   Magnanti-Wong cuts are Benders cuts that are optimal (or near optimal) with respect to the original objective and yet attempt to optimize a separate auxiliary objective.  These cuts have been adapted to computer vision problems for multi-person pose estimation \citep{wang2018accelerating} and image segmentation \citep{keuper2019massively}. In logistics applications, dual stabilization of Benders decomposition has proven effective in hub-location \citep{Contreras2011Benders} and network design \citep{Zetina2019Exact}. In these settings the auxiliary objective may be an $\ell_1$ norm as in \citet{wang2018accelerating}, or a random objective \citep{keuper2019massively}.  Remarkably these Magnanti-Wong cuts are often necessarily for convergence of Benders decomposition as reported in \citet{keuper2019massively}.  
\section{Standard column generation\label{section:stdcg}}
In this section we present a standard CG method over a set-covering formulation. We are given a set $N$ of items to be covered. The formulation includes a continuous variable $\theta_l\geq 0$ for every column $l\in\Omega$ where $\Omega$ is the set of all feasible columns. It is assumed that every column can cover an item at most once. In the context of CG, the set $\Omega$ typically is exponentially large in the size of the input, and therefore it is impractical to explicitly consider the entire set during optimization. For every column $l\in\Omega$ and for every item $u\in N$ we let $a_{ul}\in\{0, 1\}$ be a binary constant equal to 1 if $l$ covers $u$, and otherwise let $a_{ul}=0$. We consider problem:
\begin{equation}\label{setcover:cost}
    \min_{\theta}\qquad \qquad \sum_{l\in\Omega} c_l\theta_l
\end{equation}
subject to
\begin{align}
    & \sum_{l\in\Omega}a_{ul}\theta_l \geq 1 & u\in N\label{setcover:cover}\\
    & \theta_l\geq 0 & l\in\Omega.\label{setcover:nature}
\end{align}

Given the unscalability of enumerating the set $\Omega$ explicitly, CG consists in considering a subset $\Omega_R\subset\Omega$ at any given time, and thus the RMP consists in the solution of problem \eqref{setcover:cost}-\eqref{setcover:nature} but restricted to the columns in $\Omega_R$. Let us denote that problem $\mathtt{RMP}(\Omega_R)$. Now let us denote $(\alpha_u)_{u\in N}$ as the dual variables associated with the constraints \eqref{setcover:cover} in problem $\mathtt{RMP}(\Omega_R)$. The reduced cost of a column $l\in\Omega\setminus\Omega_R$ can be computed as $\overline{c_l} = c_l - \sum_{u\in N}\alpha_u a_{ul}$. Problem $\mathtt{RMP}(\Omega_R)$ provides a proven optimal solution of \eqref{setcover:cost}-\eqref{setcover:nature} if $\min\{\overline{c_l} : l\in\Omega\setminus\Omega_R\} \geq 0$. 

\section{Smooth DOI\label{section:sdoi}}

Let $s = (u, v)\in N\times N, u\neq v$ be a pair of items satisfying the following property: for every column $l\in\Omega$ such that $a_{ul} = 1, a_{vl} = 0$ the column $l'$ constructed from swapping $u$ for $v$ from $l$ also lies in $\Omega$ (the converse may not be true). Let us denote $l' = swap(l, s)$ as the outcome of the above swap operation, and let us denote by $\Omega(s)$ the set of columns $l\in\Omega$ such that $a_{ul} = 1, a_{vl} = 0$. Let us define, for a given $s$ satisfying this property, a penalty $\rho_s\in\mathbb{R}$ such that $\rho_s\geq \max\{c_{l'} - c_l: l\in\Omega(s), l'= swap(l, s)\}$. Let us consider a set $\mathcal{S}$ such that for every $s_1 = (u, v), s_2 = (v, w)\in S$, that $s_3 = (u, w)$ is also in $\mathcal{S}$ and $\rho_{s_1} + \rho_{s_2} \geq \rho_{s_3}$ holds (triangle inequality). For a pair $s = (u, v)$, we denote $s^+ = v$ and $s^- = u$. Moreover, we denote $\mathcal{S}^-_u = \{s\in\mathcal{S}: s^- = u\}$, $\mathcal{S}^+_v = \{s\in\mathcal{S}: s^+ = v\}$.

Let us consider the following stabilized form of problem \eqref{setcover:cost}-\eqref{setcover:nature}. In addition to the notation introduced before, we add one non-negative variable $\omega_s$ for every $s\in\mathcal{S}$. We consider the problem as follows:
\begin{equation}\label{smooth:cost}
    \min_{\theta, \omega}\qquad \qquad \sum_{l\in\Omega} c_l\theta_l + \sum_{s\in\mathcal{S}}\rho_s\omega_s
\end{equation}
subject to
\begin{align}
    & \sum_{l\in\Omega}a_{ul}\theta_l + \sum_{s\in \mathcal{S}^+_u} \omega_s - \sum_{s\in \mathcal{S}^-_u} \omega_s \geq 1 & u\in N\label{smooth:cover}\\
    & \theta_l\geq 0 & l\in\Omega.\label{smooth:nature1}\\
    & \omega_s\geq 0 & s\in\mathcal{S}.\label{smooth:nature2}
\end{align}

\begin{proposition}
Problem \eqref{smooth:cost}-\eqref{smooth:nature2} admits an optimal solution $(\theta^*, \omega^*)$ such that $\omega^*_s = 0$ for every $s\in\mathcal{S}$. 
\end{proposition}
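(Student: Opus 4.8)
The plan is to prove something slightly stronger: that there is a cost-non-increasing \emph{repair} operation mapping any feasible $(\theta,\omega)$ of \eqref{smooth:cost}--\eqref{smooth:nature2} to a feasible solution of the form $(\theta',0)$. Since setting $\omega=0$ reduces \eqref{smooth:cost}--\eqref{smooth:nature2} to the original covering problem \eqref{setcover:cost}--\eqref{setcover:nature}, any such $(\theta',0)$ is feasible for the stabilized problem; applying the repair to an optimal $(\theta^*,\omega^*)$ then yields a feasible solution with $\omega=0$ whose objective is no larger than the optimum, hence itself optimal, which is exactly the claim. I would start from a basic optimal solution so that only finitely many $\theta_l$ are positive, which is what makes the repair terminate.

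The engine of the repair is a single \emph{swap} move. Pick any $s=(u,v)\in\mathcal{S}$ with $\omega_s>0$. If some $l\in\Omega(s)$ has $\theta_l>0$, set $\epsilon=\min(\theta_l,\omega_s)$ and $l'=swap(l,s)$, and update $\theta_l\leftarrow\theta_l-\epsilon$, $\theta_{l'}\leftarrow\theta_{l'}+\epsilon$, $\omega_s\leftarrow\omega_s-\epsilon$. One checks row by row that \eqref{smooth:cover} is preserved: for $u=s^-$ the loss of $\epsilon$ units of coverage is exactly offset because reducing $\omega_s$ adds $\epsilon$ through the $-\sum_{s\in\mathcal{S}^-_u}\omega_s$ term; for $v=s^+$ the gain of $\epsilon$ units of coverage is offset through the $+\sum_{s\in\mathcal{S}^+_v}\omega_s$ term; every other row (and every other variable $\omega_{s'}$) is untouched, since $swap$ changes only the membership of $u$ and $v$. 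The objective changes by $\epsilon(c_{l'}-c_l-\rho_s)\le 0$ by the defining inequality $\rho_s\ge\max\{c_{l'}-c_l\}$. Each application drives either $\omega_s$ or $\theta_l$ to zero, so with finitely many active columns this move alone eliminates every $\omega_s$ that admits an active swap partner.

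The main obstacle is the remaining case: $\omega_s>0$ while every active column covering $u=s^-$ also covers $v=s^+$, so no swap partner exists. Here I would view $\omega$ as a nonnegative flow on the digraph with an arc $s$ directed from $s^-$ to $s^+$; the $\omega$-terms of \eqref{smooth:cover} are then precisely the net flow into each node, so a node with net outflow must be over-covered (its coverage exceeds $1$). Two normalizations keep the objective from increasing while simplifying this flow: directed cycles can be cancelled (this leaves every row of \eqref{smooth:cover} unchanged and, at an optimum, is cost-neutral, since otherwise one could drain or pump the cycle to contradict optimality or boundedness), and any two-arc path $u\to v\to w$ can be rerouted onto the direct arc $(u,w)\in\mathcal{S}$, which by the triangle inequality $\rho_{(u,w)}\le\rho_{(u,v)}+\rho_{(v,w)}$ preserves every row and does not increase the objective. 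After normalization the obstructed $s$ forces $v$ to be over-covered as well (the over-coverage of $u$ is inherited by $v$), so I would remove $\omega_s$ jointly with an active column $l$ covering both $u$ and $v$, absorbing the resulting slack at $v$; the objective change of this joint move is $-\epsilon(c_l+\rho_s)$, which is non-positive because $c_l+\rho_s<0$ would let one pump this column-plus-$\omega$ combination to contradict boundedness of the optimum. I expect the delicate points to be exactly this control of the sign of the objective change in the obstructed case, and the choice of a potential (the total $\omega$-mass together with the size of the active support) that strictly decreases under each move so that the procedure terminates.
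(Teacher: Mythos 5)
Your overall architecture coincides with the paper's: your swap move is exactly the operation the paper performs when a swap partner exists, your cycle cancellation and two-arc rerouting via the triangle inequality are precisely the paper's first two cases (reverse pairs and the triangle rerouting), and the paper replaces your iterative repair plus termination potential by a one-shot contradiction at an optimum that minimizes $\sum_{l\in\Omega}\theta_l+\sum_{s\in\mathcal{S}}\omega_s$ --- a device you could borrow to avoid the termination question you leave open. Where you genuinely depart from the paper is that you noticed the case it silently skips: when no inflow arc into $u=s^-$ carries weight, the paper simply asserts that some column of $\Omega(s)$ (covering $u$ but \emph{not} $v$) is active, ignoring the possibility that every active column covering $u$ also covers $v$. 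Identifying this obstructed case is to your credit.

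However, your treatment of that case has a genuine gap. The joint move $\theta_l\leftarrow\theta_l-\epsilon$, $\omega_s\leftarrow\omega_s-\epsilon$ for an active column $l$ covering both $u$ and $v$ decreases the left-hand side of \eqref{smooth:cover} by $\epsilon$ in the row of \emph{every other} item $w$ covered by $l$, and nothing compensates those rows; ``absorbing the slack at $v$'' repairs only the row of $v$. Those other rows may be tight, and the failure is not patchable in general because in the obstructed case the proposition itself can be false when $\rho_s<0$: take $N=\{u,v,w_1,w_2\}$, $\Omega=\{\{u,v,w_1\},\{u,v,w_2\}\}$ with both costs equal to $10$, $\mathcal{S}=\{(u,v)\}$, $\rho_s=-10$ (all of the paper's conditions hold vacuously since $\Omega(s)=\emptyset$); then $\theta_l=1$ for both columns together with $\omega_s=1$ is feasible with cost $10$, while every solution with $\omega_s=0$ costs at least $20$, so \emph{no} optimum has $\omega_s=0$, even though the problem is bounded --- your pumping argument correctly gives $c_l+\rho_s\geq 0$, but that controls only the objective, not feasibility. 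What the obstructed case actually requires is the additional hypothesis $\rho_s\geq 0$ (implicit in the paper and true in its applications with nonnegative penalties), and under it the correct move is much simpler than yours: set $\omega_s\leftarrow 0$ and touch nothing else. After your own normalization $v$ is a pure sink, and since every active column covering $u$ also covers $v$, the column coverage of $v$ is at least $1+\omega_s$; hence deleting the inflow $\omega_s$ keeps row $v$ (and trivially row $u$) feasible, leaves all other rows untouched, and changes the cost by $-\rho_s\omega_s\leq 0$, with the minimal-mass argument supplying the contradiction when $\rho_s=0$. With that replacement your proof closes; as written, the obstructed-case move fails.
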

\begin{proof}
Let $(\theta^*, \omega^*)$ be an optimal solution to problem \eqref{smooth:cost}-\eqref{smooth:nature2}. If multiple optima exist, let $(\theta^*, \omega^*)$ be the one attaining the lowest possible value of $\Sigma(\theta^*, \omega^*) = \sum\{\theta^*_l:l\in\Omega\} + \sum\{\omega^*_s: s\in\mathcal{S}\}$. We will prove, by contradiction, that no variable $\omega^*_s$ can take a strictly positive value. Let us assume that $\omega^*_s > 0$ for a certain $s\in\mathcal{S}$ and let us consider the following three scenarios:
\begin{itemize}
    \item \textbf{There exists $t\in\mathcal{S}$ such that $\omega^*_t > 0, s^- = t^+, s^+ \neq t^-$}. 
    
    Let $r = (t^-, s^+)$. Because of the triangle inequality, $r$ lies in $S$ and $\rho_r\leq \rho_s + \rho_t$. Let $\Delta = \min\{\omega^*_s, \omega^*_t\}$. We let $\omega^{\prime}_s\leftarrow \omega^*_s - \Delta$, $\omega^{\prime}_t\leftarrow \omega^*_t - \Delta$, $\omega^{\prime}_r\leftarrow \omega^*_r + \Delta$ and $\omega^{\prime}_{k} \leftarrow \omega^*_{k}$ for all $k\in\mathcal{S}\setminus\{s, t, r\}$. It is easy to see that $(\theta^*, \omega^{\prime})$ is primal feasible. The marginal contribution of replacing $\omega^*$ by $\omega^{\prime}$ is $\Delta(\rho_r - \rho_s - \rho_t)$, which is non-positive. If negative (meaning that $\Delta(\rho_r - \rho_s - \rho_t) < 0$) we obtain a contradiction with the optimality of $(\theta^*, \omega^*)$. If zero, on the other hand, the operation provides an alternate optimal solution $(\theta^*, \omega^{\prime})$ such that $\Sigma(\theta^*, \omega^{\prime}) < \Sigma(\theta^*, \omega^*)$, which is also a contradiction. 
    
    \item \textbf{There exists $t\in\mathcal{S}$ such that $\omega^*_t > 0, s^- = t^+, s^+ = t^-$}.
    
    In this case we let $\Delta = \min\{\omega^*_s, \omega^*_t\}$ and let $\omega^{\prime}_s\leftarrow \omega^*_s - \Delta$, $\omega^{\prime}_t\leftarrow \omega^*_t - \Delta$, and $\omega^{\prime}_{k} \leftarrow \omega^*_{k}$  for all $k\in\mathcal{S}\setminus\{s, t\}$. It is easy to see that the solution $(\theta^*, \omega^{\prime})$ is also primal feasible and the marginal contribution of this operation is $-\Delta(\rho_s + \rho_t)$. The term $(\rho_s + \rho_t)$ is non-negative because of the following observation. From the definition of $\Omega(\cdot)$ we have that $l\in\Omega(s), l^{\prime} = swap(l, s) \Leftrightarrow l^{\prime}\in\Omega(t), l = swap(l', t)$. Then, for any such pair $(l, l')$ we have ---from the conditions satisfied by $\rho$--- that $\rho_s + \rho_t\geq [c_{l^{\prime}} - c_l] + [c_l - c_{l^{\prime}}]\geq 0$. If $\rho_s + \rho_t > 0$, this operation results in a contradiction with the optimality of $(\theta^*, \omega^*)$. If zero, on the other hand, the solution obtained is an alternate optimum and $(\theta^*, \omega^{\prime})$ is such that $\Sigma(\theta^*, \omega^{\prime}) < \Sigma(\theta^*, \omega^*)$, which is also a contradiction.
    
    \item \textbf{For every $t\in\mathcal{S}$ such that $s^- = t^+$, $\omega_t = 0$}.
    
    Since the primal problem is feasible and no $t\in\mathcal{S}$ exists satisfying $\omega^*_t > 0, s^- = t^+$, there must exist a column $l\in\Omega(s)$ such that $\theta^*_l > 0$. Let $\Delta = \min\{\omega^*_s, \theta^*_l\}$ and let $l^{\prime} = swap(l, s)$. We construct new variables $\theta^{\prime}, \omega^{\prime}$ with all components equal to those of $(\theta^*, \omega^*)$ except for $\omega^{\prime}_s\leftarrow \omega^*_s - \Delta$, $\theta^{\prime}_l\leftarrow \theta^*_l - \Delta$, $\theta^{\prime}_{l^{\prime}}\leftarrow \theta^*_{l^{\prime}} + \Delta$. This operation entails an increase in the objective of $\Delta(c_{l^{\prime}} - c_l - \rho_s)$ which by definition of $\rho_s$ is non-positive. If negative, this would contradict the optimality of $(\theta^*, \omega^*)$. If zero, on the other hand, it would entail an alternate optimum such that $\Sigma(\theta^{\prime}, \omega^{\prime}) < \Sigma(\theta^*, \omega^*)$ which is also a contradiction.
\end{itemize}
\end{proof}

\section{Flexible DOI\label{section:fdoi}}

In this section we present the Flexible DOI (F-DOI), which use the potential benefit attained when removing items from a column to bound the solution space of the dual variables $(\alpha_u)_{u\in N}$. In Section \ref{section:fdoi:general} we introduce F-DOI in the general case. In Section \ref{section:fdoi:accel} we present an efficient implementation of the F-DOI to address some scalability issues.

\subsection{The general case\label{section:fdoi:general}}

For a column $l\in \Omega$ and an item $u\in N$ we let $\sigma_{ul}\geq 0$ such that $a_{ul} = 0\Rightarrow \sigma_{ul} = 0$. The set of values $(\sigma_{ul})_{u\in N, l\in\Omega}$ is assumed to satisfy the following property. For every column $l\in \Omega_R$ and for every subset $S\subseteq N(l) = \{u\in N: a_{ul} = 1\}$, $\sum_{u\in S}\sigma_{ul}\leq c_l - c_{l'}$, where $l'\in\Omega$ is the column constructed from removing the items in $S$ from $l$. We denote $l' = remove(l, S)$. For any given $u\in N$, we let $\Lambda_u = \{\sigma_{ul}: l\in\Omega_R\}$ be the set of all possible values of $\sigma_{ul}$ across the restricted set $\Omega_R$.

In addition to the notation introduced in Section \ref{section:stdcg}, let us introduce non-negative variables $\xi_{u\sigma}$ for every $u\in N$ and for every $\sigma\in\Lambda_u$. A variable $\xi_{u\sigma}$ represents the number of columns $l$ from which $u$ will be removed with a rebate of $\sigma$. We also let $\beta_{ul\sigma}$ be a binary constant equal to 1 if $\sigma_{ul} = \sigma$. The following is a stabilized version of \eqref{setcover:cost}-\eqref{setcover:nature}:
\begin{equation}\label{flexible:cost}
    \min_{\theta, \xi}\qquad \qquad \sum_{l\in\Omega} c_l\theta_l - \sum_{u\in N, \sigma\in\Lambda_u}\sigma\xi_{u\sigma}
\end{equation}
subject to
\begin{align}
    & \sum_{l\in\Omega}a_{ul}\theta_l - \sum_{\sigma\in\Lambda_u} \xi_{u\sigma} \geq 1 & u\in N\label{flexible:cover}\\
    & \xi_{u\sigma} - \sum_{l\in\Omega}\beta_{ul\sigma}\theta_l \leq 0 & u\in N, \sigma\in\Lambda_u\label{flexible:bound}\\
    & \theta_l\geq 0 & l\in\Omega\label{flexible:nature1}\\
    & \xi_{u\sigma}\geq 0 & u\in N, \sigma\in\Lambda_u.\label{flexible:nature2}
\end{align}

Intuitively, the F-DOI allow the over-coverage of some items for a reward in the objective. If this reward is sufficiently small, it will not be sufficient to compensate the over-coverage and thus will be non-basic in an optimal solution of problem \eqref{flexible:cost}-\eqref{flexible:nature2}. The following proposition formalizes this result.
\begin{proposition}\label{prop:fdoi}
	Problem \eqref{flexible:cost}-\eqref{flexible:nature2} admits an optimal solution $(\theta^*, \xi^*)$ such that $\xi^*_{u\sigma} = 0$ for every $u\in N, \sigma\in\Lambda_u$.
\end{proposition}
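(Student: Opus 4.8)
The plan is to mirror the structure of the proof of the S-DOI proposition: take an optimal solution, pick the one minimizing an auxiliary magnitude, assume some $\xi^*_{u\sigma}>0$, and derive a contradiction by exhibiting a modified solution that is feasible, no worse in objective, and strictly smaller in the auxiliary magnitude. The key conceptual fact to exploit is that the variable $\xi_{u\sigma}$ is a ``rebate'' whose reward $\sigma$ is, by construction of the $\sigma_{ul}$ values, an underestimate of the true cost savings from actually \emph{removing} item $u$ from a column $l$ that carries the value $\sigma_{ul}=\sigma$. So whenever we are paying a positive $\xi_{u\sigma}$, we should be able to instead literally remove $u$ from a supporting column via the $remove$ operation and do at least as well.

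First I would fix an optimal $(\theta^*,\xi^*)$ and, among all optima, choose one minimizing a tie-breaking quantity such as $\Sigma(\theta^*,\xi^*)=\sum_{l\in\Omega}\theta^*_l$, or perhaps $\sum_l \theta^*_l + \sum_{u,\sigma}\xi^*_{u\sigma}$, to be fixed once I see which makes the strict-decrease argument go through. Next, assuming $\xi^*_{u\sigma}>0$ for some $u,\sigma$, I would invoke the coupling constraint \eqref{flexible:bound}: since $\xi^*_{u\sigma}\le \sum_{l}\beta_{ul\sigma}\theta^*_l$, positivity of $\xi^*_{u\sigma}$ forces the existence of a column $l$ with $\beta_{ul\sigma}=1$ (i.e.\ $\sigma_{ul}=\sigma$, hence $a_{ul}=1$) and $\theta^*_l>0$. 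This $l$ is the column from which I will actually remove $u$.

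The core step is the exchange. Let $\Delta=\min\{\xi^*_{u\sigma},\theta^*_l\}>0$, set $l'=remove(l,\{u\})$, and define $\xi'_{u\sigma}\leftarrow \xi^*_{u\sigma}-\Delta$, $\theta'_l\leftarrow\theta^*_l-\Delta$, $\theta'_{l'}\leftarrow\theta^*_{l'}+\Delta$, leaving everything else unchanged. I would then verify feasibility of \eqref{flexible:cover}: the covering constraint for $u$ loses $a_{ul}\Delta=\Delta$ from the $\theta$ side but simultaneously gains $\Delta$ because the $-\sum_\sigma \xi_{u\sigma}$ term increases by $\Delta$, so its slack is preserved; for every other item $w\in N(l)\setminus\{u\}$ the coverage is unchanged since $l'$ still covers $w$; and items outside $N(l)$ are untouched. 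Constraint \eqref{flexible:bound} must also be rechecked, since decreasing $\theta_l$ shrinks the right-hand bound for \emph{other} pairs $(w,\sigma_{wl})$ with $w\neq u$ covered by $l$; this is the subtle point. The objective change is $\Delta(c_{l'}-c_l)+\Delta\sigma$, and by the defining property of $\sigma$ with $S=\{u\}$ we have $\sigma=\sigma_{ul}\le c_l-c_{l'}$, so $c_{l'}-c_l+\sigma\le 0$: the move is non-increasing. As in the S-DOI proof, a strict decrease contradicts optimality, while equality yields an alternate optimum with strictly smaller tie-breaking quantity, again a contradiction.

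The main obstacle I anticipate is precisely the interaction with the coupling constraints \eqref{flexible:bound} for the \emph{other} items carried by $l$: reducing $\theta^*_l$ by $\Delta$ could violate $\xi^*_{w\sigma'}\le\sum_{k}\beta_{wk\sigma'}\theta^*_k$ if $l$ was the (near-)tight support for some other rebate $\xi^*_{w\sigma'}$. I expect to resolve this either by arguing that $l'=remove(l,\{u\})$ inherits the same $\sigma_{wl'}=\sigma_{wl}$ values for the surviving items (so that the $\beta$-support is transferred from $l$ to $l'$ and the bound is maintained), or by first reducing any now-infeasible $\xi^*_{w\sigma'}$ by the same $\Delta$ as part of the same exchange, noting that lowering a rebate only \emph{increases} the objective's $-\sigma\xi$ term and relaxes \eqref{flexible:cover}, hence cannot break feasibility or optimality. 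Pinning down which of these two repairs is clean---and confirming that the $remove$ operation preserves the relevant $\sigma$ values, which the paper's hypotheses seem to permit---is the step I would spend the most care on.
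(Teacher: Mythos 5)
Your core argument is, step for step, the paper's own proof: the same tie-breaking functional $\Sigma$, the same use of \eqref{flexible:bound} to extract a supporting column $l$ with $\beta_{ul\sigma}=1$ and $\theta^*_l>0$, the same exchange $\Delta=\min\{\xi^*_{u\sigma},\theta^*_l\}$ with $l'=remove(l,\{u\})$, and the same two-branch contradiction. The subtlety you flag is genuine, and you should know that the paper does \emph{not} resolve it --- it simply asserts that the modified solution is feasible. The concern is real: if $l$ is the unique support of some other positive rebate $\xi^*_{w\sigma'}$ with $w\in N(l)\setminus\{u\}$, $\sigma'=\sigma_{wl}$, and that constraint \eqref{flexible:bound} is tight, then shifting mass from $l$ to $l'$ violates it whenever $\sigma_{wl'}\neq\sigma_{wl}$, since nothing in the hypotheses forces $l'$ to carry the same rebate values as $l$.

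However, neither of your two proposed repairs works as stated. The first (that $remove$ preserves $\sigma_{wl'}=\sigma_{wl}$ for surviving items) is not implied by the paper's hypotheses: the only assumption on the rebates is the subset inequality $\sum_{v\in S}\sigma_{vl}\le c_l-c_{l'}$ for $l'=remove(l,S)$, which says nothing relating $\sigma_{\cdot l'}$ to $\sigma_{\cdot l}$; it does hold in the SSCFLP instantiation (where $\sigma_{ul}=c_{iu}$ depends only on the facility), but the general proposition cannot invoke that. The second repair is unsound in its accounting: decreasing a rebate $\xi_{w\sigma'}$ by $\delta$ \emph{increases} the objective by $\sigma'\delta\ge 0$, so the move is no longer guaranteed non-increasing and the contradiction evaporates (e.g.\ if $c_{l'}-c_l+\sigma=0$ and $\sigma'>0$, the combined move is strictly worse than $(\theta^*,\xi^*)$, which proves nothing). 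The clean fix is to fold the affected items into the column operation itself: let $W=\{w\in N(l)\setminus\{u\}: \xi^*_{w\sigma_{wl}}>0\}$, set $\Delta=\min\{\xi^*_{u\sigma},\,\theta^*_l,\,\min_{w\in W}\xi^*_{w\sigma_{wl}}\}>0$, shift $\Delta$ units of $\theta$ from $l$ to $l''=remove(l,\{u\}\cup W)$, and decrease $\xi_{u\sigma}$ and every $\xi_{w\sigma_{wl}}$, $w\in W$, by $\Delta$. Coverage \eqref{flexible:cover} is preserved (items of $\{u\}\cup W$ lose $\Delta$ of coverage but also $\Delta$ of rebate; the remaining items of $N(l)$ are still covered by $l''$); each constraint \eqref{flexible:bound} is preserved, since for the pairs $(u,\sigma)$ and $(w,\sigma_{wl})$, $w\in W$, the left side drops by $\Delta$ while the right side drops by at most $\Delta$, and every other pair whose right-hand side shrinks has $\xi^*=0$ by the definition of $W$; and the objective changes by $\Delta\bigl(c_{l''}-c_l+\sigma_{ul}+\sum_{w\in W}\sigma_{wl}\bigr)\le 0$, the inequality being exactly the subset hypothesis applied with $S=\{u\}\cup W$ --- which, incidentally, is why that hypothesis is stated for all subsets rather than just singletons. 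Since $\Sigma$ drops by $(1+|W|)\Delta>0$, both branches of the contradiction go through.
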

\begin{proof}
	Let $(\theta^*, \xi^*)$ be an optimal solution to problem \eqref{flexible:cost}-\eqref{flexible:nature2}. If multiple such optima exist, let $(\theta^*, \xi^*)$ be the one that minimizes $\Sigma(\theta, \xi) = \sum\{\theta_l: l\in\Omega\} + \sum\{\xi_{u\sigma}: u\in N, \sigma\in\Lambda_u\}$. We will prove by contradiction that if $\xi^*_{u\sigma} > 0$ for some $u\in N, \sigma\in\Lambda_u$ then either $(\theta^*, \xi^*)$ cannot be optimum, or that an alternate optimum $(\theta^{\prime}, \xi^{\prime})$ exists such that $\Sigma(\theta^{\prime}, \xi^{\prime}) < \Sigma(\theta^*, \xi^*)$.
	
	Let $\xi^*_{u\sigma} > 0$ for some $u\in N, \sigma\in\Lambda_u$. Constraints \eqref{flexible:bound} assure the existence of a column $l\in\Omega$ such that $\beta_{ul\sigma} = 1$ and $\theta^*_l > 0$. Let $\Delta = \min\{\xi^*_{u\sigma}, \theta^*_l\}$. Let $l'=remove(l, \{u\})$ be the column resulting from removing $u$ from $l$. Now, let us consider a solution $(\theta^{\prime}, \xi^{\prime})$ with all entries equal to those of $(\theta^*, \xi^*)$ except for the entries $\xi^{\prime}_{u\sigma}\leftarrow \xi^*_{u\sigma} - \Delta$, $\theta^{\prime}_{l}\leftarrow \theta^*_{l} - \Delta$, $\theta^{\prime}_{l^{\prime}}\leftarrow \theta^*_{l^{\prime}} + \Delta$. This operation entails a feasible solution with a marginal contribution to the objective equal to $\Delta(c_{l^{\prime}} - c_l + \sigma)$. Either this quantity is negative ---which would contradict the optimality of $(\theta^*, \xi^*)$--- or $\Sigma(\theta^{\prime}, \xi^{\prime}) < \Sigma(\theta^{*}, \xi^{*})$ which is also not possible.
\end{proof}

\subsection{An efficient implementation\label{section:fdoi:accel}}


Problem \eqref{flexible:cost}-\eqref{flexible:nature2} may contain an exponentially large number of $\xi$ variables as indexed by all the possible values of $\sigma$. 
Let $\Lambda^R_u\subseteq \Lambda_u$ be a (typically much) smaller subset of $\sigma$ values which includes the largest value in $\Lambda_u$. Because any rebate vector $\sigma^{\prime}$ such that $\sigma^{\prime}_{ul}\leq \sigma_{ul}$ for every $u\in N, l\in\Omega$ is a valid rebate vector, one could replace every rebate $\sigma\in\Lambda_u$ by $\sigma^{\prime}\leftarrow\max\{\sigma^{\prime}\in \Lambda^R_u: \sigma^{\prime} \leq \sigma\}$. Therefore, one may consider problem \eqref{flexible:cost}-\eqref{flexible:nature2} by replacing $\Lambda_u$ by $\Lambda^R_u$, thus reducing the number of variables $\xi$ and the number of constraints \eqref{flexible:bound}, \eqref{flexible:nature2} without compromising the conditions required for Proposition \ref{prop:fdoi}. Such a set $\Lambda^R_u$ can be constructed by selecting the $M$ quantities in $\Lambda_u$, where $M$ is a user defined parameter ($M$=20 works well in practice).  Observe that adding columns to the RMP changes the quintiles and hence would seem to require resolving the RMP from scratch during each iteration of CG.  Since resolving the RMP from scratch at each iteration of CG is  too time intensive we only change the quintiles periodically.  We add the value zero to $\Lambda^R_u$ $ \forall u \in \mathcal{N}$ so as to model the case of a value $\sigma$ that is smaller than any quintile value.

\section{Smooth-Flexible DOI\label{section:sfdoi}}

In this section we address the problem of combining both S-DOI and F-DOI within an unified framework. We refer to the resulting inequalities as Smooth-Flexible DOI and denote them as SF-DOI. Using the same notation and decision variables introduced in Sections \ref{section:stdcg}-\ref{section:fdoi}, we consider the following optimization problem:
\begin{equation}\label{sfdoi:cost}
\min_{\theta, \omega, \xi}\qquad \qquad \sum_{l\in\Omega} c_l\theta_l + \sum_{s\in\mathcal{S}}\rho_s\omega_s - \sum_{u\in N, \sigma\in\Lambda_u}\sigma\xi_{u\sigma}
\end{equation}
subject to
\begin{align}
& \sum_{l\in\Omega}a_{ul}\theta_l + \sum_{s\in \mathcal{S}^+_u} \omega_s - \sum_{s\in \mathcal{S}^-_u} \omega_s - \sum_{\sigma\in\Lambda_u} \xi_{u\sigma} \geq 1 & u\in N\label{sfdoi:cover}\\
& \xi_{u\sigma} - \sum_{l\in\Omega}\beta_{ul\sigma}\theta_l \leq 0 & u\in N, \sigma\in\Lambda_u\label{sfdoi:bound}\\
& \theta_l\geq 0 & l\in\Omega\label{sfdoi:nature1}\\
& \omega_s\geq 0 & s\in\mathcal{S}\label{sfdoi:nature2}\\
& \xi_{u\sigma}\geq 0 & u\in N, \sigma\in\Lambda_u.\label{sfdoi:nature3}
\end{align}

The following proposition formalizes the correctness of the SF-DOI by proving that, at optimality, variables $\omega, \xi$ are non-basic.
\begin{proposition}
	Problem \eqref{sfdoi:cost}-\eqref{sfdoi:nature3} admits an optimal solution $(\theta^*, \omega^*, \xi^*)$ such that $\omega^*_s = 0$ for every $s\in\mathcal{S}$ and that $\xi^*_{u\sigma} = 0$ for every $u\in N, \sigma\in\Lambda_u$.
\end{proposition}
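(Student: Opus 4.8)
The plan is to reduce this combined statement to the two propositions already established — the S-DOI proposition of Section~\ref{section:sdoi} and Proposition~\ref{prop:fdoi} — by eliminating the two families of stabilization variables one block at a time. The structural fact that makes this work is that the variables $\omega$ occur only in the covering constraints \eqref{sfdoi:cover}, whereas the variables $\xi$ occur in \eqref{sfdoi:cover} and, crucially, in the coupling constraints \eqref{sfdoi:bound}. I would begin with an arbitrary optimal solution $(\bar\theta,\bar\omega,\bar\xi)$ of \eqref{sfdoi:cost}-\eqref{sfdoi:nature3} and proceed in two phases, first driving $\xi$ to zero and only afterwards driving $\omega$ to zero.

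\emph{Phase one (eliminate $\xi$).} Freeze $\omega$ at $\bar\omega$ and view \eqref{sfdoi:cost}-\eqref{sfdoi:nature3} as a problem in $(\theta,\xi)$ alone. The terms $\sum_{s\in\mathcal{S}^+_u}\bar\omega_s-\sum_{s\in\mathcal{S}^-_u}\bar\omega_s$ in \eqref{sfdoi:cover} become constants, turning that constraint into $\sum_{l\in\Omega}a_{ul}\theta_l-\sum_{\sigma\in\Lambda_u}\xi_{u\sigma}\ge b_u$ with $b_u=1-\sum_{s\in\mathcal{S}^+_u}\bar\omega_s+\sum_{s\in\mathcal{S}^-_u}\bar\omega_s$; constraints \eqref{sfdoi:bound} are unaffected and the constant $\sum_{s}\rho_s\bar\omega_s$ merely shifts the objective. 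This is precisely a flexible problem \eqref{flexible:cost}-\eqref{flexible:nature2} with right-hand side $b_u$ in place of $1$. The proof of Proposition~\ref{prop:fdoi} never references the value of the right-hand side — it uses only the column witnessed by \eqref{flexible:bound}, the sign $\sigma\ge 0$, and the rebate bound $c_{l'}-c_l+\sigma\le 0$ — so the identical argument produces an optimal solution of the frozen problem with $\xi=0$. As $(\bar\theta,\bar\xi)$ is feasible for the frozen problem, the new solution has no larger frozen objective, and restoring the constant $\sum_s\rho_s\bar\omega_s$ shows that the lifted point $(\theta^\dagger,\bar\omega,0)$ is optimal for \eqref{sfdoi:cost}-\eqref{sfdoi:nature3}.

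\emph{Phase two (eliminate $\omega$).} Starting from $(\theta^\dagger,\bar\omega,0)$, freeze $\xi$ at $0$. Then \eqref{sfdoi:bound} reads $0\le\sum_{l}\beta_{ul\sigma}\theta_l$, which holds for every $\theta\ge 0$ and can be discarded, so the remaining problem in $(\theta,\omega)$ is literally the smoothed problem \eqref{smooth:cost}-\eqref{smooth:nature2}. The S-DOI proposition of Section~\ref{section:sdoi} then furnishes an optimal $(\theta,\omega)$-solution with $\omega=0$, which by the same cost comparison lifts to an optimal solution $(\theta^\ddagger,0,0)$ of \eqref{sfdoi:cost}-\eqref{sfdoi:nature3}. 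This point has $\omega_s=0$ for all $s\in\mathcal{S}$ and $\xi_{u\sigma}=0$ for all $u\in N,\sigma\in\Lambda_u$, as required.

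The crux is the ordering, and it is forced by the coupling constraints \eqref{sfdoi:bound}. A single simultaneous perturbation — combining the three S-DOI cases with the F-DOI removal — is delicate, because every move that alters $\theta$ (the swap $l\mapsto swap(l,s)$ used to absorb a positive $\omega_s$, and the removal $l\mapsto remove(l,\{u\})$ used to absorb a positive $\xi_{u\sigma}$) changes the right-hand sides $\sum_l\beta_{ul\sigma}\theta_l$ of \eqref{sfdoi:bound} and can therefore violate the bound on some other active $\xi$. Eliminating $\xi$ first sidesteps this completely: once all $\xi$ vanish the constraints \eqref{sfdoi:bound} are vacuous, so the $\theta$-swaps demanded by the S-DOI argument can no longer break any coupling constraint. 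The only point needing care is confirming that Proposition~\ref{prop:fdoi} is right-hand-side agnostic, which is immediate from its proof.
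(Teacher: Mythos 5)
Your proof is correct, but it takes a genuinely different route from the paper's. The paper works with a single optimal solution of \eqref{sfdoi:cost}-\eqref{sfdoi:nature3}, tie-broken to minimize $\Sigma(\theta,\omega,\xi)=\sum_{l\in\Omega}\theta_l+\sum_{s\in\mathcal{S}}\omega_s+\sum_{u\in N,\sigma\in\Lambda_u}\xi_{u\sigma}$, and re-runs the perturbation arguments of the S-DOI and F-DOI proofs \emph{in place} on the combined problem --- and in the opposite order to yours: it first drives $\omega$ to zero by contradiction, then $\xi$. Your freeze-and-lift scheme is more modular: each phase is an honest reduction to an already-established statement (your second phase is verbatim problem \eqref{smooth:cost}-\eqref{smooth:nature2}, so the S-DOI proposition of Section~\ref{section:sdoi} applies as a black box; your first phase needs the extra --- and correct --- observation that the proof of Proposition~\ref{prop:fdoi} never uses the covering right-hand side). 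Moreover, your remark about the ordering exposes a real subtlety that the paper's proof sketch does not address: in the paper's order, the third case of the S-DOI argument moves $\theta$-mass from $l$ to $l'=swap(l,s)$ while some $\xi_{v\sigma'}$ may still be positive; since the rebate of an item $v$ in $l'$ need not equal $\sigma_{vl}$, this swap can decrease $\sum_{m\in\Omega}\beta_{vm\sigma'}\theta_m$ and thereby violate \eqref{sfdoi:bound} for an active $\xi_{v\sigma'}$, so feasibility of that perturbation is not automatic and would require an additional repair of the $\xi$ variables that disturbs the objective accounting. Your order makes \eqref{sfdoi:bound} vacuous before any $\theta$-swap occurs and avoids this entirely. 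What the paper's approach buys is brevity and no need to re-open earlier proofs; what yours buys is rigor at the coupling constraints, at the modest cost of having to inspect the proof of Proposition~\ref{prop:fdoi} to confirm it is right-hand-side agnostic (it is), and of inheriting whatever care that proof takes with \eqref{flexible:bound} --- a liability shared equally by the paper.
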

\begin{proof}
	Let $(\theta^*, \omega^*, \xi^*)$ be an optimal solution to problem \eqref{sfdoi:cost}-\eqref{sfdoi:nature3}. If multiple optima exist, let it be one that minimizes $\Sigma(\theta, \omega, \xi) = \sum_{l\in\Omega}\theta_l + \sum_{s \in \mathcal{S}}\omega_s + \sum_{u\in N, \sigma\in\Lambda_u}\xi_{u\sigma}$. The proof follows by applying the same arguments provided for the proofs of correctness for the S-DOI and F-DOI in sequence. First we assume that $\omega^*_s > 0$ for some $s\in\mathcal{S}$ to arrive to a contradiction. Then, assuming that $\omega^*_s = 0$ for every $s\in\mathcal{S}$, we assume that $\xi^*_{u\sigma} > 0$ for some $u\in N, \sigma\in\Lambda_u$ to arrive to another contradiction.
\end{proof}

\begin{remark}
	The same arguments used to reduce the problem size associated with the F-DOI can be applied to reduce the problem size of \eqref{sfdoi:cost}-\eqref{sfdoi:nature3} by replacing $\Lambda$ by a restricted subset $\Lambda^R_u\subseteq\Lambda_u$.
\end{remark}

\section{Efficient pricing\label{section:effpricing}}

Let us consider the mathematical problem \eqref{sfdoi:cost}-\eqref{sfdoi:nature3} including the SF-DOI. Let $(\alpha, \gamma)$ be a vector of dual variables associated with the constraints \eqref{sfdoi:cover}-\eqref{sfdoi:bound}. For a column $l\in\Omega$, its reduced cost $\overline{c_l}$ can be computed as follows:
\begin{equation}\label{sfdoi:redcost}
\overline{c_l} = c_l - \sum_{u\in N} a_{ul}\alpha_u + \sum_{u\in N, \sigma\in\Lambda_u}\beta_{ul\sigma}\gamma_{u\sigma}.
\end{equation}

As shown by \citet{FlexDOIArticle} in the context of set-packing, the dual variables $\gamma$ can be ignored by the pricing algorithm, yet without compromising the correctness of the CG method. This result is important since it allows us to use a standard pricing algorithm instead of having to adapt one for the consideration of the new dual variables $\gamma$. The following proposition formalizes this result.
\begin{proposition}
	Let $(\alpha^*, \gamma^*)$ be an optimal solution to the dual RMP obtained from dualizing problem \eqref{sfdoi:cost}-\eqref{sfdoi:nature3} and where $\Omega$ is replaced by a restricted set $\Omega_R$, and such that $\min\{c_l - \sum_{u\in N} a_{ul}\alpha^*_u: l\in\Omega\} \geq 0$. Then $\sum_{u\in N}\alpha^*_u = z^*$, where $z^*$ is the optimal value associated with problem \eqref{setcover:cost}-\eqref{setcover:nature}.
\end{proposition}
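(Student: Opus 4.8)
The plan is to sandwich the quantity $\sum_{u\in N}\alpha^*_u$ between $z^*$ from above and below, so that the two bounds force equality. First I would dualize the stabilized RMP explicitly. Writing $\alpha_u\geq 0$ for the covering constraints \eqref{sfdoi:cover} and $\gamma_{u\sigma}\geq 0$ for the bound constraints \eqref{sfdoi:bound}, the dual reads $\max\sum_{u\in N}\alpha_u$ subject to $\sum_{u}a_{ul}\alpha_u+\sum_{u,\sigma}\beta_{ul\sigma}\gamma_{u\sigma}\leq c_l$ for $l\in\Omega_R$, $\alpha_{s^+}-\alpha_{s^-}\leq\rho_s$ for $s\in\mathcal{S}$, and $\alpha_u+\gamma_{u\sigma}\geq\sigma$ for $u\in N,\sigma\in\Lambda_u$. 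The key structural observation --- the same one that lets pricing ignore $\gamma$ --- is that the bound constraints \eqref{sfdoi:bound} have a zero right-hand side, so $\gamma$ does not appear in the dual objective; the objective is exactly $\sum_u\alpha_u$, with right-hand side $1$ on each covering constraint. By LP strong duality applied to the stabilized RMP, $\sum_u\alpha^*_u$ equals the primal optimum, which I will denote $z_R^{SF}$, of the stabilized RMP.

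For the upper bound I would use dual feasibility for the unstabilized problem. The hypothesis $\min\{c_l-\sum_u a_{ul}\alpha^*_u : l\in\Omega\}\geq 0$ states precisely that $\sum_u a_{ul}\alpha^*_u\leq c_l$ for every $l\in\Omega$, and since $\alpha^*\geq 0$, the vector $\alpha^*$ is feasible for the dual of the unstabilized full problem \eqref{setcover:cost}-\eqref{setcover:nature}, namely $\max\{\sum_u\alpha_u : \sum_u a_{ul}\alpha_u\leq c_l\ \forall l\in\Omega,\ \alpha\geq 0\}$. That dual has optimal value $z^*$ by strong duality, so any feasible point gives $\sum_u\alpha^*_u\leq z^*$.

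For the lower bound I would show $z_R^{SF}\geq z^*$ by invoking the $\omega=\xi=0$ result. Applied to the stabilized problem over the \emph{full} set $\Omega$ --- where the closure of $\Omega$ under $swap(\cdot,s)$ and $remove(\cdot,S)$ holds by construction --- the proposition preceding Section \ref{section:effpricing} shows this problem has an optimum with $\omega=\xi=0$; since fixing $\omega=\xi=0$ recovers exactly \eqref{setcover:cost}-\eqref{setcover:nature} (the bound constraints become vacuous), its optimal value equals $z^*$. The stabilized RMP is then obtained from this full stabilized problem by fixing $\theta_l=0$ for $l\notin\Omega_R$ and $\xi_{u\sigma}=0$ for $\sigma\notin\Lambda_u$; as these are merely variable fixings, the RMP is a restriction of the full stabilized problem and therefore $z_R^{SF}\geq z^*$. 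Combining the three steps yields $z^*\leq z_R^{SF}=\sum_u\alpha^*_u\leq z^*$, hence equality. Equivalently, the lower bound can be read as a repair argument: the swap/remove moves used in Proposition \ref{prop:fdoi} and its S-DOI analogue map any stabilized RMP solution to a genuine cover in $\Omega$ of no greater cost.

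The main obstacle I anticipate is the lower bound, and specifically the bookkeeping between $\Omega_R$ and $\Omega$. The local moves that drive $\omega$ and $\xi$ to zero create columns $swap(l,s)$ and $remove(l,\{u\})$ that are guaranteed to lie in $\Omega$ but need not lie in $\Omega_R$, so one cannot apply the $\omega=\xi=0$ proposition directly to the RMP; the clean route is to apply it on the full problem (where closure is available) and then present the RMP as the face of that problem obtained by variable fixing. Care is also needed with the index set $\Lambda_u$, which is defined relative to $\Omega_R$, to confirm that the fixings $\xi_{u\sigma}=0$ for the missing $\sigma$ values leave the constraints \eqref{sfdoi:bound} and the objective intact. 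The two duality steps are routine once the dual is written down correctly.
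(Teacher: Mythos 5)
Your proposal is correct and takes essentially the same route as the paper: both sandwich $\sum_{u\in N}\alpha^*_u$ between $z^*$ on the two sides, with the upper bound obtained exactly as you do (the pricing condition makes $\alpha^*$ feasible for the dual of \eqref{setcover:cost}-\eqref{setcover:nature}, hence $\sum_{u\in N}\alpha^*_u\leq z^*$), and the lower bound obtained from the restriction relation $\Omega_R\subseteq\Omega$. The paper compresses that lower bound into the single sentence ``since $\Omega_R\subseteq\Omega$ we have $z^*\leq\sum_{u\in N}\alpha^*_u$''; your explicit dualization, strong-duality step, and the detour through the full stabilized problem with $\omega=\xi=0$ (via the earlier propositions) are precisely the details needed to justify that sentence, not a different argument.
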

\begin{proof}
	Since $\Omega_R\subseteq\Omega$ we have that $z^*\leq \sum_{u\in N}\alpha^*_u$. Also, since $\min\{c_l - \sum_{u\in N} a_{ul}\alpha^*_u: l\in\Omega\} \geq 0$ it follows that $\alpha^*$ is also feasible for the dual of problem \eqref{setcover:cost}-\eqref{setcover:nature}, which in turn implies that $\sum_{u\in N}\alpha^*_u \leq z^*$.
\end{proof}

\section{Case study: single-source capacitated facility location\label{section:sscflp}}

In this section we provide an efficient implementation of S-DOI, F-DOI and SF-DOI to the solution of the single-source capacitated facility location problem (SSCFLP). CG remains one of the most efficient methods to compute high quality dual bounds for the SSCFLP \citep{diaz2002branch}, although recently cut-and-solve methods have proven particularly efficient for its solution \citep{yang2012cut, Gadegaard2018improved}. The motivation to consider the SSCFLP in this case study comes from the observation made by other authors in the past that CG applied for the SSCFLP suffers from serious degeneracy issues for problems with dense optimal columns \citep{Gadegaard2018improved}.

In the SSCFLP, we are given a set of customers $N$ and a set of potential facilities $I$, and such that $I\cap N=\emptyset$. With each facility $i\in I$ is associated a fixed opening cost $f_i$ and a capacity $K_i$. With each customer $u\in N$ is associated a demand $d_u\geq 0$. With each pair $(i, u)\in I\times N$ is associated an assignment cost $c_{iu}\geq 0$. Without loss of generality, we may assume that all parameters are integer-valued. It is assumed that $\sum\{K_i: i\in I\}\geq \sum\{d_u: u\in N\}$ as otherwise the problem is trivially infeasible. The problem consists in selecting a subset $I^{\prime}\subseteq I$ of facilities to open, and to assign every customer to exactly one open facility in such a way that the capacities of the selected facilities are respected, at minimum total cost (fixed costs + assignment costs).

The SSCFLP can be modeled as a set-covering problem, as follows. For every facility $i\in I$, let $\Omega_i$ be the set of pairs $(i, S)$ where $S\subseteq N$ is a customer subset that can be assigned fully to $i$ without it exceeding its capacity (thus such that $\sum\{d_u:u\in S\}\leq K_i$). The set of all such pairs $(i, S)$ is denoted $\Omega$. The cost of an assignment pattern $l = (i, S)$ is given by $c_l = f_i + \sum\{c_{iu}: u\in S\}$. For an assignment pattern $l = (i, S)$ let $a_{ul}\geq 0$ be a binary constant taking the value 1 if $u\in S$. Let $\theta_l\geq 0$ be a continuous variable representing the number of times that the assignment pattern $l$ appears in the solution. The set-covering formulation for the SSCFLP is as follows:
\begin{equation}\label{sscflp:cost}
\min_{\theta}\qquad \sum_{l\in\Omega}c_l\theta_l 
\end{equation}
subject to
\begin{align}
& \sum_{l\in\Omega}a_{ul}\theta_l \geq 1 & u\in N\label{sscflp:covering}\\
& \sum_{l\in\Omega_i}\theta_l \leq 1 & i\in I\label{sscflp:bound}\\
& \theta_l \geq 0 & l\in\Omega.
\end{align}

To embed this problem within the proposed framework, we need to provide efficient mechanisms to compute the rebates $\sigma$ (for the F-DOI) and the penalties $\rho$ (for the S-DOI). We propose the following formulaes for the computation of those quantities:
\begin{align}
\rho_{uv} & \leftarrow \max\{c_{iv} - c_{iu}: i\in I\} & u, v\in N, u\neq v, d_u \geq d_v\\
\sigma_{ul} & \leftarrow c_{iu} & u\in N, l = (i, S)\in\Omega.
\end{align}

It is easy to see that these quantities satisfy the conditions necessary for the validity of the SF-DOI using formulation \eqref{sfdoi:cost}-\eqref{sfdoi:nature3}. Moreover, they are computationally cheap to compute. The pricing subproblem, as usual for the SSCFLP, remains a 0-1 knapsack problem.

\section{Computational results}
\label{section:results}

In this section we present computational evidence of the impact of the DOI introduced in this article for the particular case of the SSCFLP. We consider two classical benchmark datasets ---namely the \citeauthor{holmberg1999exact} and the \citeauthor{yang2012cut} datasets--- plus two additional synthetic datasets generated by us. The performance of the DOI is assessed according to the speedup provided with respect to a classical (non-stabilized) CG algorithm for the same problem. Each algorithm is executed until proving optimality of the CG, thus  the linear relaxation is solved fully. The algorithms have been coded in Matlab and we use CPLEX as general-purpose MIP solver for solving the RMP. Our machine is equipped with a 8-core AMD Ryzen 1700 CPU @3.0 GHz and 32 GB of memory running Windows 10.




When employing F-DOI we assign the values in $\Lambda_u^R$ as 20 evenly spaced quantiles over the distribution of values in $\Lambda_u$. As more columns enter the RMP this distribution changes.  We update $\Lambda_u^R$ periodically to reflect this change and more accurately represent the distribution.  We update on iterations 1, 5, 25, 100, 200, 500 and every 500 iterations onward. When employing S-DOI we save computational time in solving the RMP by only including a subset of the DOI. Specifically, we include only the DOI variables associated to the smallest 25\% of $\rho_s$ values.

For pricing, each facility induces a 0-1 knapsack problem capable of producing a negative reduced cost column. We choose to return the first 20 columns with negative reduced cost found while iterating over the facilities. If all facilities are cycled through before 20 negative reduced cost columns are found, the algorithm terminates pricing and returns all the negative reduced cost columns found by that point. If through a single round pricing no negative reduced cost columns are found after all facilities have been cycled through, then pricing is terminated and CG is then complete. The 0-1 knapsack problems are solved using dynamic programming.

\subsection{Results on the \citeauthor{holmberg1999exact} dataset}

In our first set of experiments we test our DOI on problems from the SSCFLP benchmark dataset defined in \citep{holmberg1999exact}. We focus on the 16 largest problems (numerically indexed 56-71 in the original paper) where $|N|=200$ and $|I|=30$. In those instances, the capacities and demands are assigned randomly such that the ratio of total capacity over total demand ($K_{total}/d_{total}$) ranges from 1.97 to 3.95. The facility fixed costs are distributed over a range from 500 to 1500 and the assignment costs are proportional to the Euclidean distance between a customer and a facility.

We execute each algorithm variant on all 16 problem instances in this dataset. In Table \ref{table:TB2} we report the CPU times required to achieve the linear relaxation lower bound of problem \eqref{sscflp:cost}-\eqref{sscflp:bound} for each algorithm variant, and the associated speedup as a a factor of the CPU taken for the standard CG. In Figure \ref{fig:TB2} we report the following data. In the two top figures, we plot average gap (across all 16 problem instances) as a function of the CPU time (left-most figure) and the number of iterations (right-most figure). The two bottom figures report the speedup (the time ratio of employing a given DOI as a factor of employing no DOI) as a function of run time (left-most figure) and number of iterations (right-most figure). All three DOI on average provide a speedup over standard CG with S-DOI performing the best with an average speedup of 3.3. The S-DOI provides a positive speedup for all 16 instances while the F-DOI and the SF-DOI show more mixed results in a case by case analysis. The F-DOI provide a positive speedup in 11 out of the 16 instances while the SF-DOI provide a positive speedup in 12 out of the 16 instances. All three DOI categorically required fewer CG iterations to converge than the standard CG, however employing these DOI comes at greater computational cost in solving the RMP. This computational liability is enough to negate the benefit of the DOI in those instances where the DOI provided no benefit. The SF-DOI notably required the fewest iterations to converge on average but still, for the most part, is outperformed by the S-DOI when judging for time. We note that employing the F-DOI largely comes at a significantly greater computational cost than employing the S-DOI, which accounts for its lower performance benefit.

\begin{table}[!hbtp]
	\centering
	\scalebox{0.8}{
		\begin{tabular}{|c|c c c c|c c c|} 
		\hline
		& \multicolumn{4}{c|}{\bf Time (sec)} & \multicolumn{3}{c|}{\bf Speedup}\\
		\bf Instance & \bf Standard & \bf S-DOI & \bf F-DOI & \bf SF-DOI & \bf S-DOI & \bf F-DOI & \bf SF-DOI \\
		\hline
		56 & 9.3 & 5.9 & 11.7 & 9.5 & 1.6 & 0.8 & 1.0 \\ 
		57 & 9.4 & 6.9 & 21.8 & 21.8 & 1.4 & 0.4 & 0.4 \\
		58 & 9.8 & 8.6 & 48.5 & 41.3 & 1.1 & 0.2 & 0.2 \\
		59 & 15.4 & 10.2 & 42.6 & 34.6 & 1.5 & 0.4 & 0.4 \\
		60 & 32.8 & 8.8 & 10.9 & 8.5 & 3.7 & 3.0 & 3.8 \\ 
		61 & 43.7 & 9.4 & 21.8 & 10.8 & 4.6 & 2.0 & 4.0 \\
		62 & 33.7 & 11.4 & 45.1 & 43.8 & 3.0 & 0.7 & 0.8 \\
		63 & 69.0 & 12.9 & 21.0 & 11.8 & 5.3 & 3.3 & 5.8 \\
		64 & 55.5 & 10.6 & 14.9 & 6.0 & 5.2 & 3.7 & 9.2 \\ 
		65 & 87.1 & 13.1 & 16.5 & 11.0 & 6.6 & 5.3 & 7.9 \\
		66 & 55.1 & 16.6 & 37.8 & 41.6 & 3.3 & 1.5 & 1.3 \\
		67 & 72.5 & 66.3 & 25.1 & 18.0 & 1.1 & 2.9 & 4.0 \\
		68 & 29.4 & 8.5 & 13.9 & 10.4 & 3.5 & 2.1 & 2.8 \\ 
		69 & 44.9 & 11.2 & 19.2 & 10.3 & 4.0 & 2.3 & 4.4 \\
		70 & 81.2 & 18.5 & 68.3 & 30.3 & 4.4 & 1.2 & 2.7 \\
		71 & 45.1 & 15.0 & 27.8 & 21.3 & 3.0 & 1.6 & 2.1 \\
		\hline
		\bf mean & \bf43.4 & \bf14.6 & \bf27.9 & \bf20.7 & \bf3.3 & \bf2.0 & \bf3.2 \\
		\bf median & \bf44.3 & \bf10.9 & \bf21.8 & \bf14.9 & \bf3.4 & \bf1.8 & \bf 2.8 \\
		\hline
	\end{tabular}}
	\caption{\citeauthor{holmberg1999exact} dataset $(|N|=200)$ runtime results}
	\label{table:TB2}
\end{table}

\begin{figure}[!hbtp]
	\includegraphics[width=0.45\linewidth]{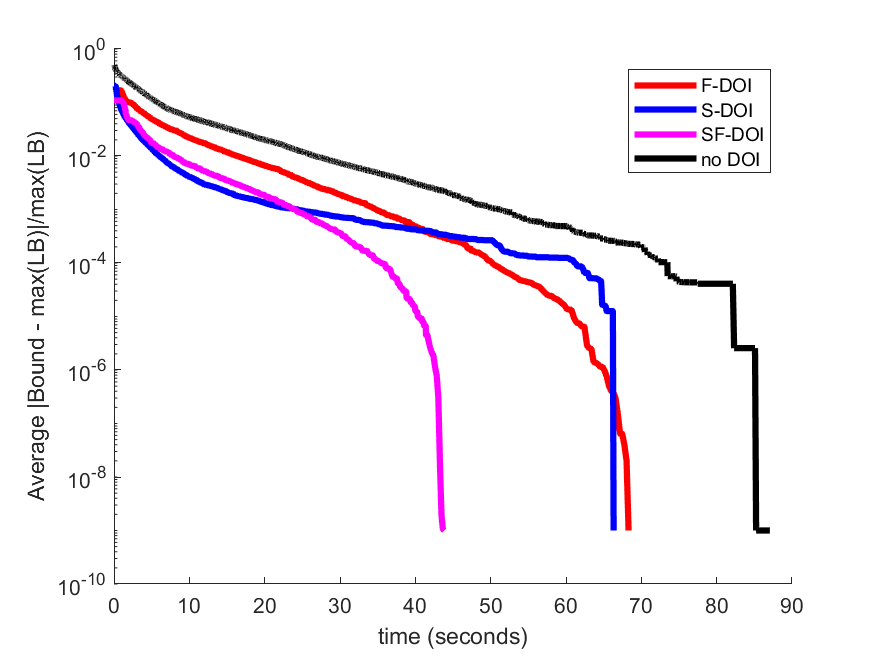}
	\includegraphics[width=0.45\linewidth]{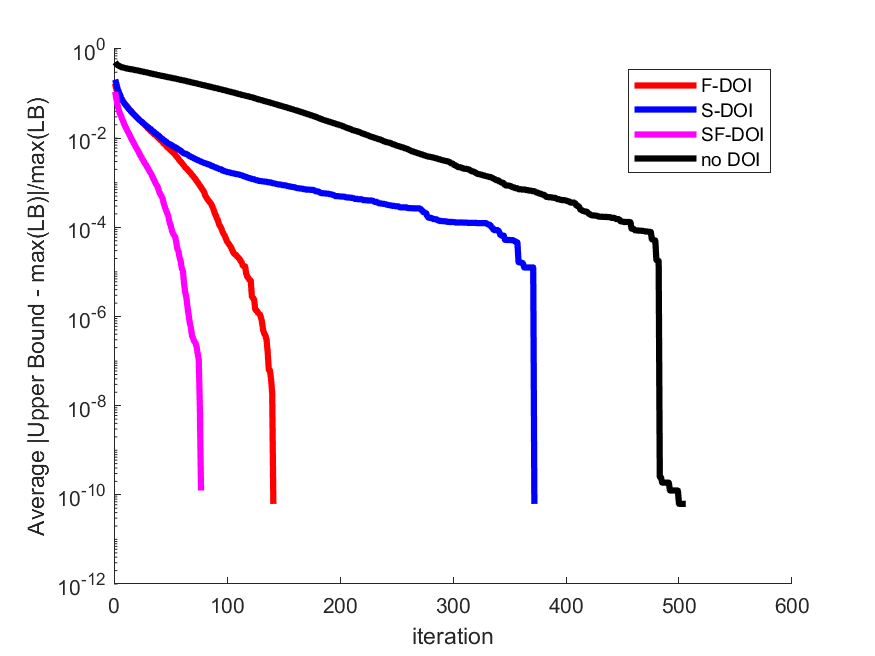}\\
	\includegraphics[width=0.45\linewidth]{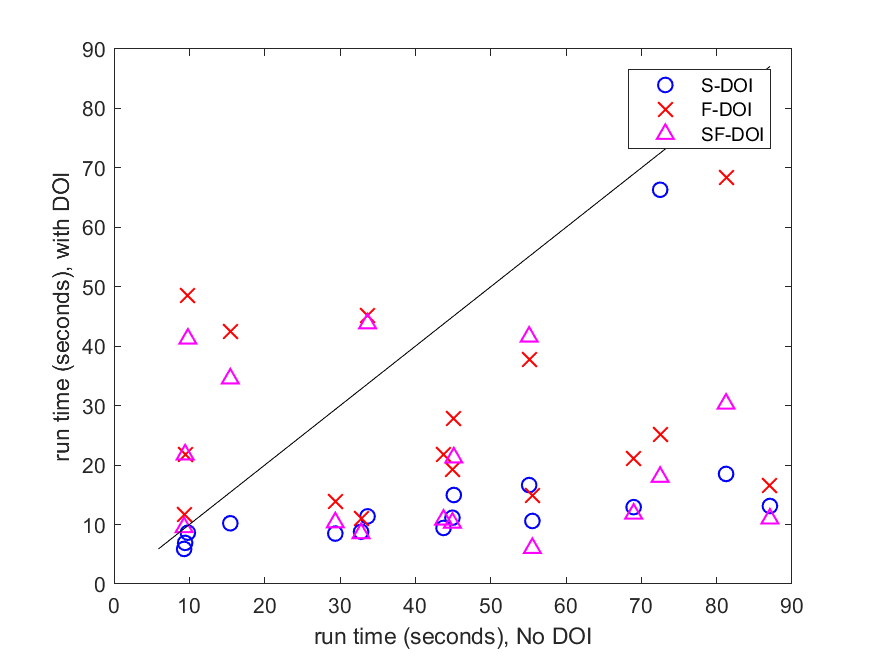}
	\includegraphics[width=0.45\linewidth]{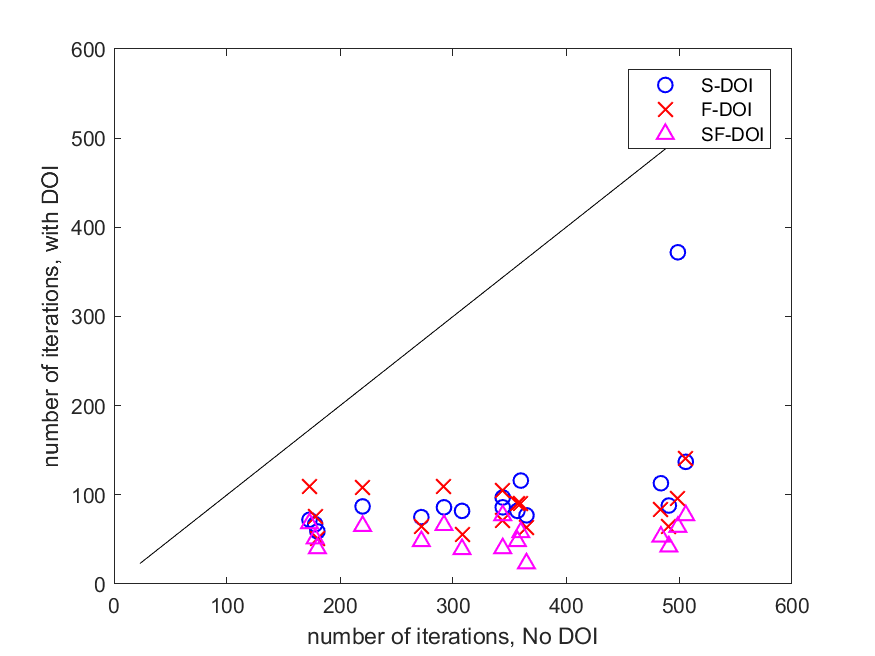}
	\caption{\citeauthor{holmberg1999exact} dataset $(|N|=200)$, Aggregated plots. Relative gaps are displayed as relative difference between upper and the maximum lower bounds.
		\textbf{(Top Left):} Average relative gap over 16 problem instances as a function of time.  
		\textbf{(Top Right):} Average relative gap over 16 problem instances as a function of iterations.
		\textbf{(Bottom Left):} Comparative run times between using DOI and using no DOI for all 16 problem instances.  
		\textbf{(Bottom Right):} Comparative iterations required between using DOI and using no DOI for all 16 problem instances.}
	\label{fig:TB2}
\end{figure}

\subsection{Results on the \citeauthor{yang2012cut} dataset}

In our second set of experiments we use the benchmark dataset presented in \citet{yang2012cut}. This dataset contains a number of large instances.  We focus here on a subset of 10 instances where $|N|=200$.  Instances 1-5 have $|I|=30$ while instance 6-10 have $|I|=60$. Customer and facility locations are randomly assigned on the unit square. Costs are set as the euclidean distance between a particular customer and facility multiplied by 10 and then rounded. The ratio of total capacity to total demand ranges from 1.8 to 3.5.

We report the same data as for the \citeauthor{holmberg1999exact} dataset. In Table \ref{table:TB3} we report the computing times required to achieve the linear relaxation lower bounds. In Figure \ref{fig:TB3} we report the DOI average performance as a function of time and iterations, just as in Figure \ref{fig:TB2}. Here, although each DOI provides an improvement in the iterations, the F-DOI and the SF-DOI fail to improve upon the convergence time and in fact manage to slow down convergence overall. The computational cost here for the F-DOI is too high when compared to the iterative benefit, leading to a speedup (or a slowdown rather) of 0.2. The S-DOI still manage to provide a positive speedup in all instances, with an average speedup of 1.7.  We note here that the F-DOI and the SF-DOI can often have their benefits negated due to their heavier computational cost. They tend to provide better benefits on problems whose solutions contain denser columns relative to the size of the problem.  If the problem does not contain dense columns, the benefit cannot manage to outweigh the computational cost, as is seen here.

\begin{table}[!hbtp]
	\centering
	\scalebox{0.8}{
	\begin{tabular}{|c|c c c c|c c c|} 
		\hline
		& \multicolumn{4}{c|}{\bf Time (sec)} & \multicolumn{3}{c|}{\bf Speedup}\\
		\bf Instance & \bf Standard & \bf S-DOI & \bf F-DOI & \bf SF-DOI & \bf S-DOI & \bf F-DOI & \bf SF-DOI \\
		\hline
		1 & 47.5 & 29.1 & 217.5 & 151.4 & 1.6 & 0.2 & 0.3 \\ 
		2 & 40.2 & 26.0 & 222.1 & 137.8 & 1.5 & 0.2 & 0.3 \\
		3 & 53.0 & 28.3 & 213.2 & 143.0 & 1.9 & 0.2 & 0.4 \\
		4 & 74.6 & 36.6 & 276.2 & 171.1 & 2.0 & 0.3 & 0.4 \\
		5 & 131.5 & 89.2 & 572.6 & 269.9 & 1.5 & 0.2 & 0.5 \\
		6 & 58.0 & 30.3 & 249.4 & 156.9 & 1.9 & 0.2 & 0.4 \\
		7 & 49.5 & 27.1 & 233.8 & 142.5 & 1.8 & 0.2 & 0.3 \\
		8 & 68.9 & 40.5 & 312.3 & 159.3 & 1.7 & 0.2 & 0.4 \\
		9 & 53.9 & 27.7 & 236.5 & 127.9 & 1.9 & 0.2 & 0.4 \\
		10 & 56.7 & 33.6 & 249.6 & 148.5 & 1.7 & 0.2 & 0.4 \\
		\hline
		\bf mean & \bf 63.4 & \bf 36.8 & \bf 278.3 & \bf 160.8 & \bf 1.8 & \bf 0.2 & \bf 0.4 \\
		\bf median & \bf 55.3 & \bf 29.7 & \bf 243.0 & \bf 149.9 & \bf 1.8 & \bf 0.2 & \bf 0.4 \\
		\hline
	\end{tabular}}
	\caption{\citeauthor{yang2012cut} dataset $(|N|=200)$ runtime results}
	\label{table:TB3}
\end{table}

\begin{figure}[!hbtp]
	\includegraphics[width=0.45\linewidth]{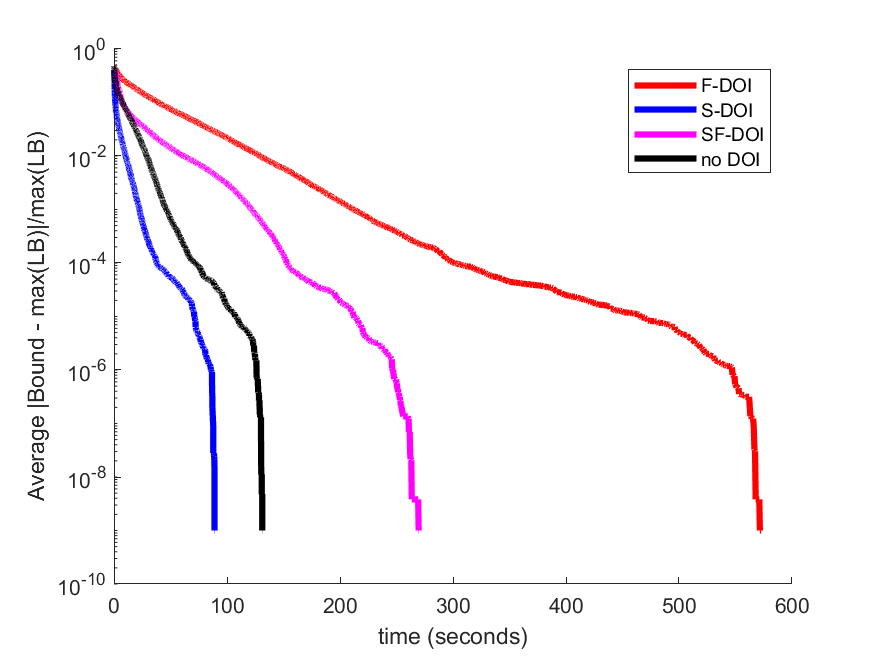}
	\includegraphics[width=0.45\linewidth]{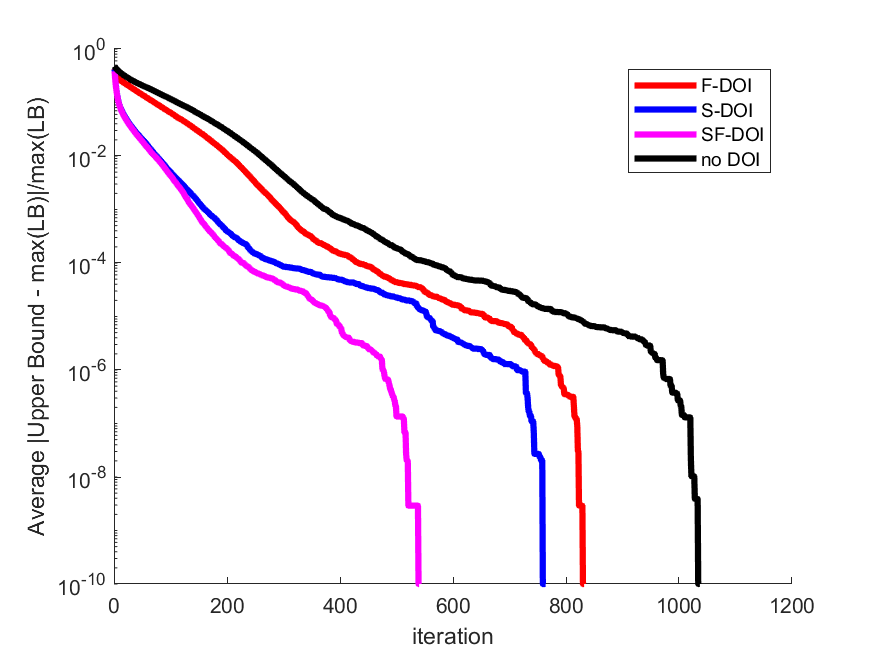}\\
	\includegraphics[width=0.45\linewidth]{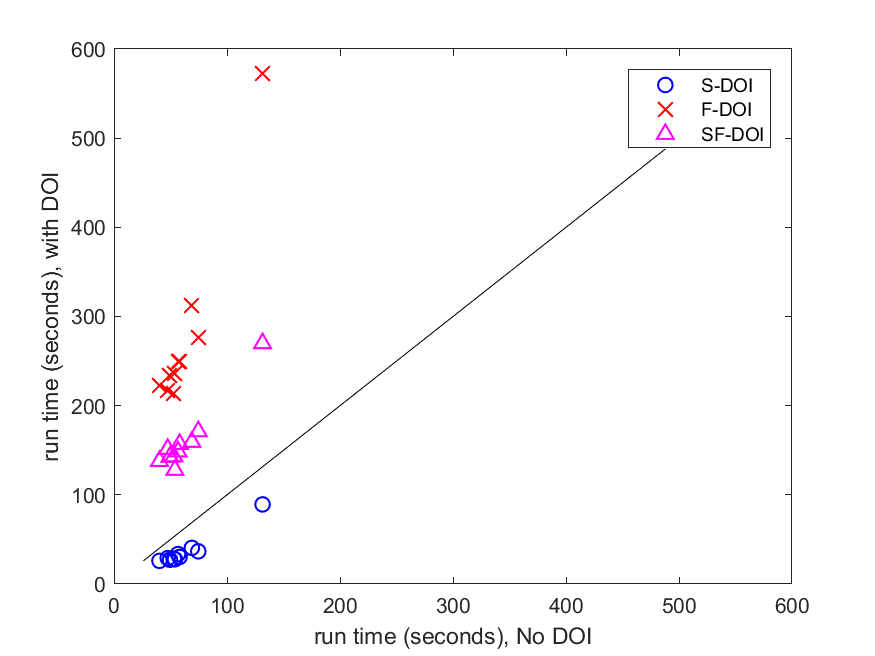}
	\includegraphics[width=0.45\linewidth]{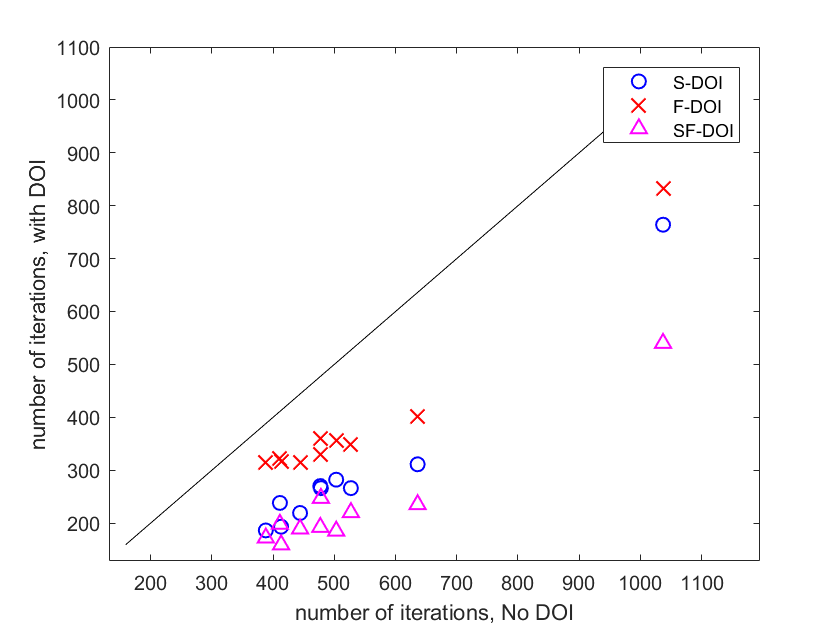}
	\caption{\citeauthor{yang2012cut} dataset $(|N|=200)$, Aggregated plots.  Relative gaps are displayed as the relative difference between upper and maximum lower bound.
		\textbf{(Top Left):} Average relative gap over 10 problem instances as a function of time.  
		\textbf{(Top Right):} Average relative gap over 10 problem instances as a function of iterations.
		\textbf{(Bottom Left):} Comparative run times between using DOI and using no DOI for all 10 problem instances.  
		\textbf{(Bottom Right):} Comparative iterations required between using DOI and using no DOI for all 10 problem instances.}
	\label{fig:TB3}
\end{figure}

\subsection{DOI and their effect on problems with dense columns}

We note that problems with denser columns in the final solution can lead to more degeneracy, allowing the DOI to provide more benefit. Facility capacities provide a hard ceiling on the size of any potential column. We aim here to relax this ceiling and see the effect on the performance of each of the DOI. Specifically, we take the same problem instances from \citet{holmberg1999exact} and \citet{yang2012cut} in the previous sections and boost each facility capacity by a factor of 2, 3, and 4. For each facility we use $K_{i}'=LK_{i}$ as the facility capacity where $K_{i}$ was the original facility capacity and $L$ is the factor increase.

The speedup results for both datasets are show in Table \ref{table:TB2TB3_upped_capacity}. Here we see a general trend of improvement in speedup as $L$ increases. The F-DOI and SF-DOI show the most marked improvement. The F-DOI goes from an average speedup of 2.0 and 0.2 at $L=1$ on \citeauthor{holmberg1999exact} and \citeauthor{yang2012cut} datasets respectively to 9.2 and 4.4 at $L=4$. The S-DOI also shows overall improvement, going from an average speedup of 3.3 to 8.6 on \citeauthor{holmberg1999exact} and from 1.8 to 4.0 on \citeauthor{yang2012cut}. The SF-DOI shows the greatest average performance at high capacity levels with a speedup of 16.1 and 11.5 at $L=4$ on \citeauthor{holmberg1999exact} and \citeauthor{yang2012cut} datasets, respectively. We note that on most problem instances where S-DOI and F-DOI both perform well, the SF-DOI can often experience significant diminishing returns in employing both DOI. On certain problems, however, this stark pattern of diminishing returns is not as prevalent and the SF-DOI can do better than both other DOI separately. We find this occurs frequently on heavily degenerate problems where standard CG takes particularly long to converge.

\begin{table}[!hbtp]
	\centering
	\scalebox{0.75}{
	\begin{tabular}{|c|c|c|c c c c|c c c|} 
		\hline
		\multicolumn{3}{|c|}{} & \multicolumn{4}{c|}{\bf Time (sec)} & \multicolumn{3}{c|}{\bf Speedup}\\
		\multicolumn{3}{|c|}{\bf Instance} & \bf Standard & \bf S-DOI & \bf F-DOI & \bf SF-DOI & \bf S-DOI & \bf F-DOI & \bf SF-DOI \\
		\hline
		\parbox[t]{2mm}{\multirow{8}{*}{\rotatebox[origin=c]{90}{Holmberg et al.}}} & \multirow{4}{*}{mean} & $L=1$ & 43.4 & 14.6 & 27.9 & 20.7 & 3.3 & 2.0 & 3.2 \\ 
		& &$L=2$ & 136.8 & 19.7 & 26.3 & 14.4 & 25.3 & 5.4 & 11.0 \\
		& &$L=3$ & 240.4 & 33.8 & 28.5 & 16.2 & 8.3 & 8.2 & 14.4 \\
		& &$L=4$ & 300.0 & 40.5 & 32.4 & 19.1 & 8.6 & 9.2 & 16.1 \\
		\cline{2-10}
		& \multirow{4}{*}{median} & $L=1$ & 44.3 & 10.9 & 21.8 & 14.9 & 3.4 & 1.8 & 2.8 \\ 
		& &$L=2$ & 117.6 & 12.8 & 25.5 & 13.1 & 16.7 & 5.8 & 9.8 \\
		& &$L=3$ & 211.2 & 24.7 & 27.4 & 15.9 & 8.4 & 8.4 & 13.6 \\
		& &$L=4$ & 255.1 & 29.8 & 29.0 & 18.9 & 9.0 & 8.6 & 14.8 \\
		\hline
		\parbox[t]{2mm}{\multirow{8}{*}{\rotatebox[origin=c]{90}{Yang et al.}}} & \multirow{4}{*}{mean} & $L=1$ & 63.4 & 36.8 & 278.3 & 160.8 & 1.8 & 0.2 & 0.4 \\ 
		& &$L=2$ & 198.5 & 80.8 & 382.2 & 197.0 & 2.4 & 0.5 & 1.1 \\
		& &$L=3$ & 581.8 & 191.3 & 326.6 & 173.9 & 3.0 & 1.9 & 4.2 \\
		& &$L=4$ & 1269.0 & 218.6 & 231.9 & 123.5 & 4.0 & 4.6 & 11.5 \\
		\cline{2-10}
		& \multirow{4}{*}{median} & $L=1$ & 55.3 & 29.7 & 243.0 & 149.9 & 1.8 & 0.2 & 0.4 \\ 
		& &$L=2$ & 163.6 & 72.4 & 326.0 & 181.3 & 2.3 & 0.5 & 0.8 \\
		& &$L=3$ & 360.3 & 134.9 & 311.0 & 154.1 & 2.7 & 1.1 & 2.2 \\
		& &$L=4$ & 405.3 & 159.2 & 210.2 & 114.9 & 2.8 & 2.3 & 4.5 \\
		\hline
	\end{tabular}}
	\caption{Runtime results for increased capacity. New capacity $K_{i}'=LK_{i}$ for each facility}
	\label{table:TB2TB3_upped_capacity}
\end{table}

\subsection{Results on newly generated random instances}

To assess our DOI further, we have decided to generate two new datasets. The first dataset has a specific construction where assignment costs are untruncated Euclidean distances between facilities and customers. We refer to these problems as the structured problems. We set $|N|=250$ and $|I|=50$ and generate 50 independent and identically distributed random instances. For each instance, customer and facility locations are randomly generated uniformly on the 2-D unit plane. The associated costs between facilities and customers is set as the euclidean distance between their respective locations. Each facility is given an instantiation cost $f_{i}=5$ and capacity $K_{i}=150$.  Each customer is assigned a random demand drawn uniformly over $\{1,2,3,4,5\}$. Table \ref{table:structured} provides aggregate runtimes and Figure \ref{fig:structured} aggregate plots of the DOI performance.

In the second class of random problem instances we abandon the structure defined previously and instead assign costs between facilities and customers devoid of any underlying structure. We refer to these problems as the unstructured problems. Assignment costs are randomly generated over a uniform distribution on the interval (0,1). We set $|N|=250$ and $|I|=50$ and generate 50 independent and identically distributed random instances. Each facility is given an instantiation cost $f_{i}=5$ and capacity $K_{i}=150$. Each customer is assigned a random demand drawn uniformly over $\{1,2,3,4,5\}$. Runtime results are shown in Table \ref{table:unstructured} and aggregate plots are shown in Figure \ref{fig:unstructured}. Both datasets can be downloaded from \url{http://claudio.contardo.org}.

\begin{table}[!hbtp]
	\centering
	\scalebox{0.87}{
	\begin{tabular}{|c|c c c c|c c c|} 
		\hline
		\multirow{2}{*}{ } & \multicolumn{4}{c|}{\bf Time (sec)} & \multicolumn{3}{c|}{\bf Speedup}\\
		& \bf Standard & \bf S-DOI & \bf F-DOI & \bf SF-DOI & \bf S-DOI & \bf F-DOI & \bf SF-DOI \\
		\hline
		mean & 3252.0 & 37.3 & 226.6 & 52.7 & 113.6 & 14.4 & 130.2 \\ 
		\hline
		median & 766.1 & 35.0 & 206.5 & 51.8 & 22.7 & 4.4 & 17.4 \\ 
		\hline
	\end{tabular}}
	\caption{Average runtime over 50 structured problem instances.}
	\label{table:structured}
\end{table}

\begin{figure}[!hbtp]
	\includegraphics[width=0.45\linewidth]{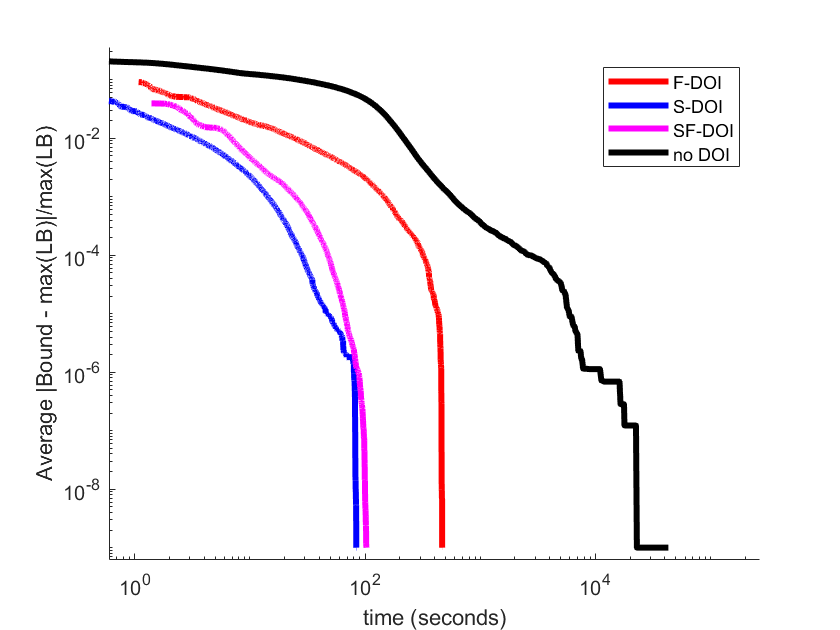}
	\includegraphics[width=0.45\linewidth]{results/structured/aggregate_time_full.png}\\
	\includegraphics[width=0.45\linewidth]{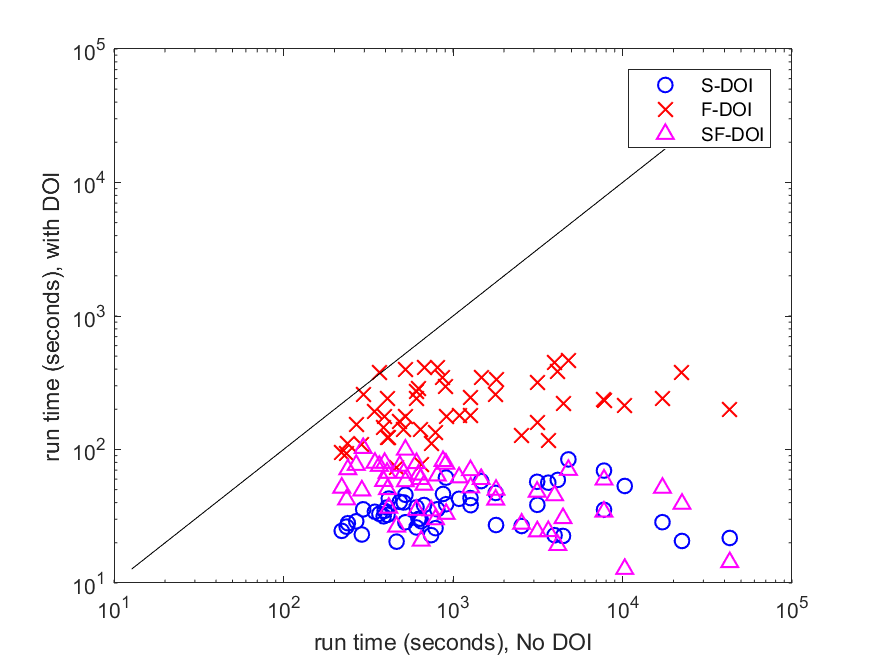}
	\includegraphics[width=0.45\linewidth]{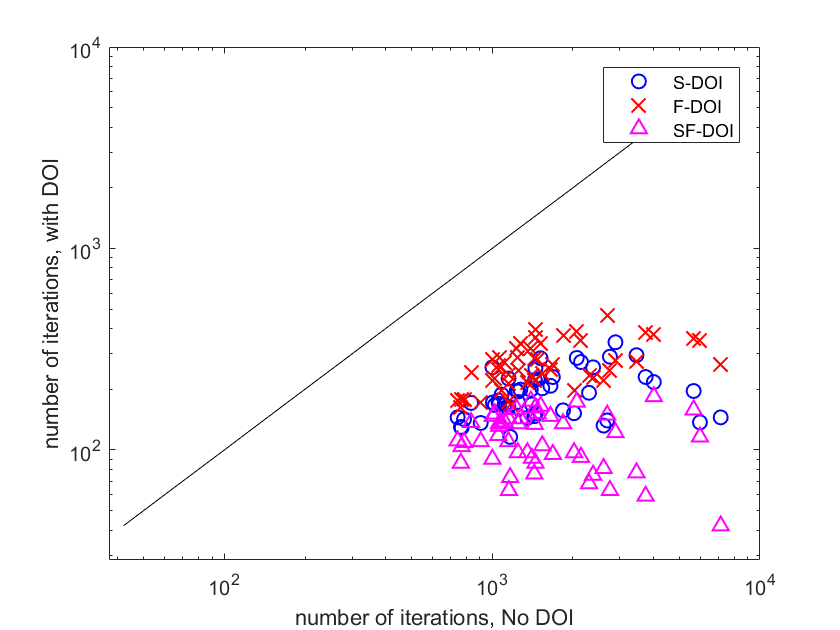}
	\caption{Structured SSCFLP aggregate plots.  Relative gaps are displayed as the relative difference between upper and maximum lower bound.
		\textbf{(Top Left):} Average relative gap over 50 problem instances as a function of time.  
		\textbf{(Top Right):} Average relative gap over 50 problem instances as a function of iterations.
		\textbf{(Bottom Left):} Comparative run times between using DOI and using no DOI for all 50 problem instances.  
		\textbf{(Bottom Right):} Comparative iterations required between using DOI and using no DOI for all 50 problem instances.}
	\label{fig:structured}
\end{figure}


\begin{table}[!hbtp]
	\centering
	\scalebox{0.87}{
	\begin{tabular}{|c|c c c c|c c c|} 
		\hline
		\multirow{2}{*}{ } & \multicolumn{4}{c|}{\bf Time (sec)} & \multicolumn{3}{c|}{\bf Speedup}\\
		& \bf Standard & \bf S-DOI & \bf F-DOI & \bf SF-DOI & \bf S-DOI & \bf F-DOI & \bf SF-DOI \\
		\hline
		mean & 89.0 & 99.5 & 52.9 & 80.5 & 0.9 & 1.7 & 1.1 \\
		\hline
		median & 88.7 & 98.7 & 53.7 & 82.4 & 0.9 & 1.7 & 1.1 \\
		\hline
	\end{tabular}}
	\caption{Average runtime over 50 unstructured problem instances.}
	\label{table:unstructured}
\end{table}

\begin{figure}[!hbtp]
	\includegraphics[width=0.45\linewidth]{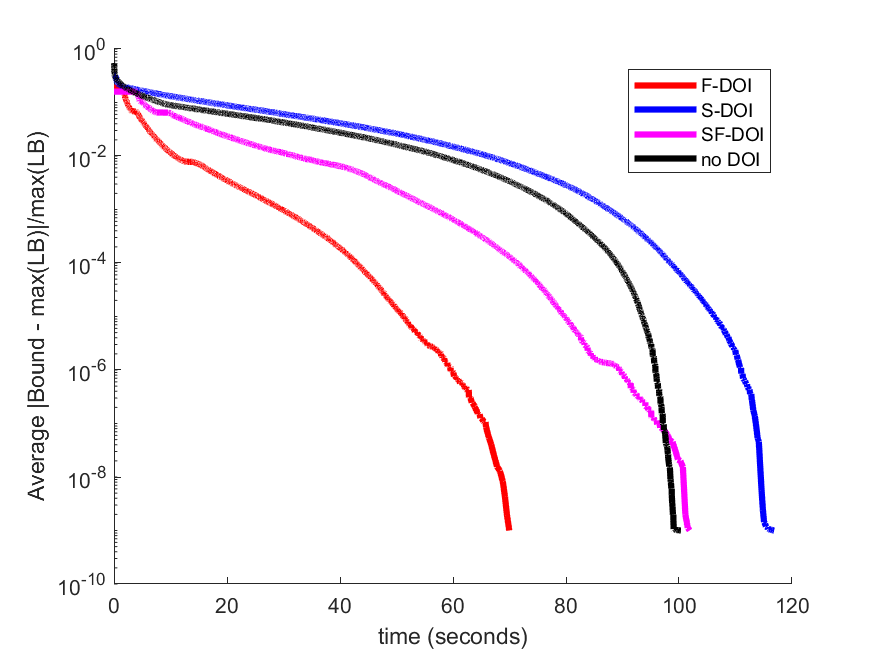}
	\includegraphics[width=0.45\linewidth]{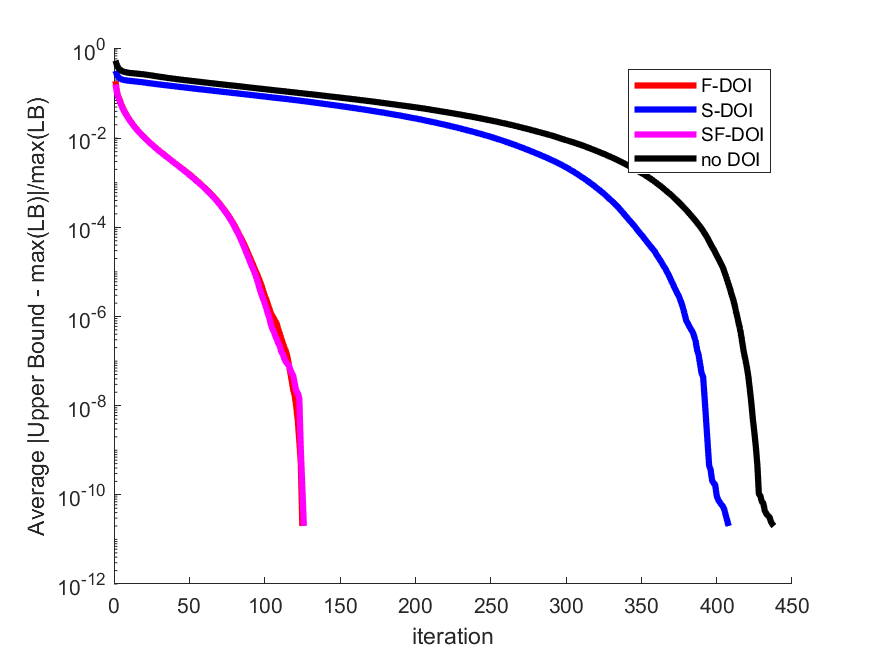}\\
	\includegraphics[width=0.45\linewidth]{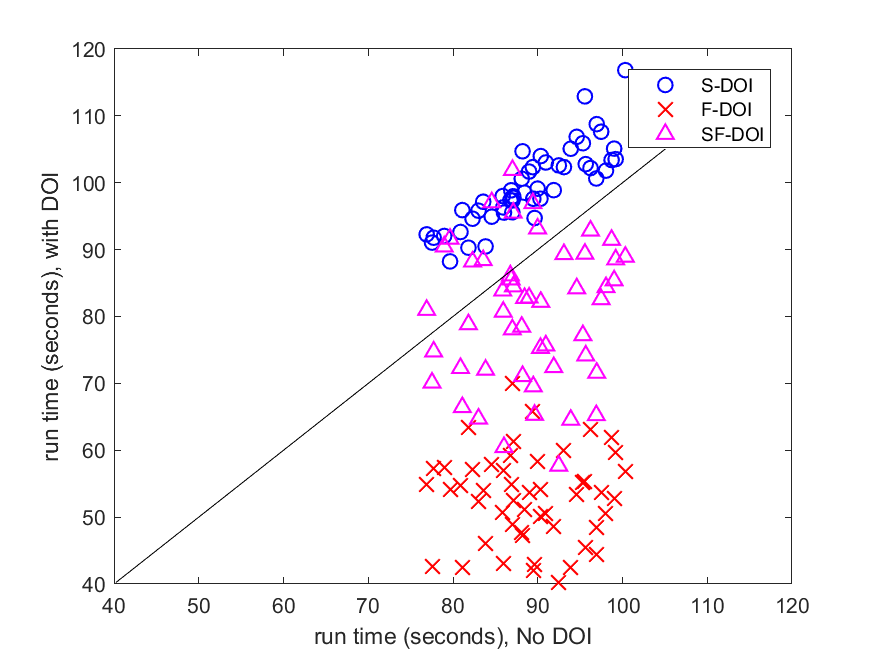}
	\includegraphics[width=0.45\linewidth]{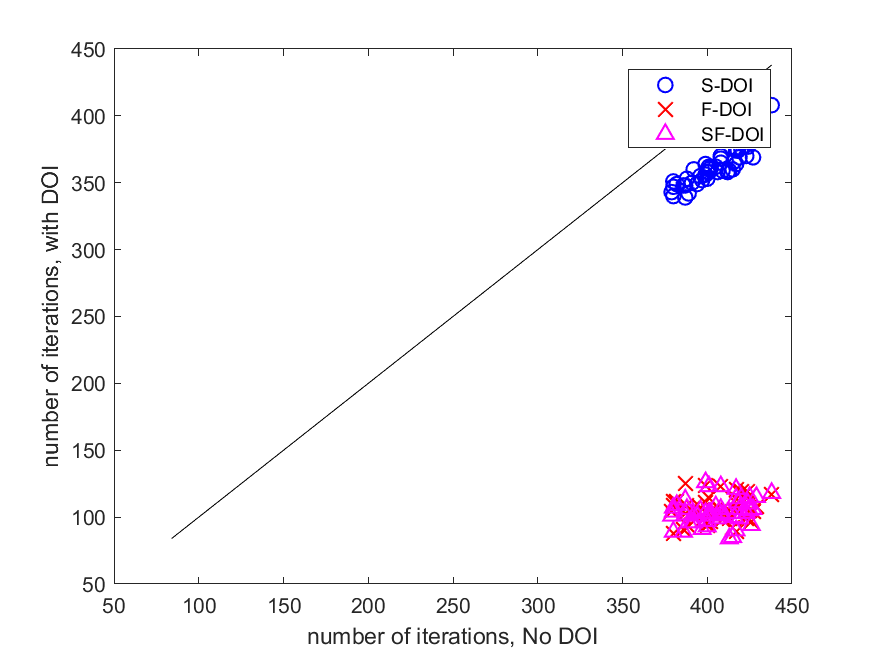}
	\caption{Unstructured SSCFLP aggregate plots.  Relative gaps are displayed as the relative difference between upper and maximum lower bound.
		\textbf{(Top Left):} Average relative gap over 50 problem instances as a function of time.  
		\textbf{(Top Right):} Average relative gap over 50 problem instances as a function of iterations.
		\textbf{(Bottom Left):} Comparative run times between using DOI and using no DOI for all 50 problem instances.  
		\textbf{(Bottom Right):} Comparative iterations required between using DOI and using no DOI for all 50 problem instances.}
	\label{fig:unstructured}
\end{figure}

We immediately see from the structured problem dataset the vast improvement achievable with the S-DOI on highly degenerate problems with an underlying structure behind the assignment costs.  The S-DOI and the SF-DOI manage median speedups of 22.7 and 17.4 respectively. The F-DOI also do well with a median speedup of 4.4. It can be observed from the relative runtime plots that the S-DOI actually outperform the SF-DOI for most problem instances. However, for particularly difficult problems for standard CG to solve, both DOI appear very competitive with one another. The SF-DOI see a massive improvement in the number of iterations required on these types of challenging problems.

From the unstructured problem dataset we observe a limitation of the S-DOI. The S-DOI fail to provide a significant benefit. This is due to the poor correlation between relative customer costs across facilities. This prevents the S-DOI from finding low $\rho_s$ values to effectively restrict the dual space. We note that the F-DOI do not share this limitation as they perform relatively well on these problems, achieving an average speedup of 1.7. Just as the S-DOI were more robust to problems with lower facility capacities, we see here that the F-DOI are more robust to problems with weaker underlying structure.

\section{Concluding remarks}
\label{section:conclusions}

In this document we introduced a new approach to accelerate the convergence of column generation when applied to set-covering-based formulations. The proposed mechanism uses little-to-no specific problem knowledge to derive two classes of dual-optimal inequalities. The Smooth-DOI (S-DOI) estimates the gain that would be obtained by replacing items from a column by items not originally covered, and applies a penalty for such swapping operation. The Flexible-DOI (F-DOI) describe rebates for removing items from columns.  These rebates are not sufficiently large to weaken the relaxation of the master problem.  
We show that our DOI can be combined without any additional implementation effort, and refer to such combination as the Smooth-Flexible-DOI (SF-DOI). We have assessed the efficiency of the new DOI to accelerate the convergence of some degenerate problem instances of the single-source capacitated facility location problem. The computational evidence shows that the new dual inequalities are efficient at reducing by a factor of up to $130\times$ the CPU times required to compute the linear relaxation dual bounds on structured problems with high a correlation between node positioning and assignment costs and loose facility capacities.
In future work we seek to apply our approach to capacitated vehicle routing and extend our approach to operate in the context of branch and price \citep{barnprice}.  The extension of our approach to consider valid inequalities such as subset-row inequalities \citep{jepsen2008subset} presents another promising avenue for exploration. 

\bibliographystyle{abbrvnat} 
\bibliography{plots/col_gen_bib}

\begin{thebibliography}{37}
\providecommand{\natexlab}[1]{#1}
\providecommand{\url}[1]{\texttt{#1}}
\expandafter\ifx\csname urlstyle\endcsname\relax
  \providecommand{\doi}[1]{doi: #1}\else
  \providecommand{\doi}{doi: \begingroup \urlstyle{rm}\Url}\fi

\bibitem[Barahona and Jensen(1998)]{barahona1998plant}
F.~Barahona and D.~Jensen.
\newblock Plant location with minimum inventory.
\newblock \emph{Mathematical Programming}, 83\penalty0 (1):\penalty0 101--112,
  1998.

\bibitem[Barnhart et~al.(1996)Barnhart, Johnson, Nemhauser, Savelsbergh, and
  Vance]{barnprice}
C.~Barnhart, E.~L. Johnson, G.~L. Nemhauser, M.~W.~P. Savelsbergh, and P.~H.
  Vance.
\newblock Branch-and-price: Column generation for solving huge integer
  programs.
\newblock \emph{Operations Research}, 46:\penalty0 316--329, 1996.

\bibitem[Ben~Amor et~al.(2006)Ben~Amor, Desrosiers, and Val{\'e}rio~de
  Carvalho]{ben2006dual}
H.~Ben~Amor, J.~Desrosiers, and J.~M. Val{\'e}rio~de Carvalho.
\newblock Dual-optimal inequalities for stabilized column generation.
\newblock \emph{Operations Research}, 54\penalty0 (3):\penalty0 454--463, 2006.

\bibitem[Benders(1962)]{benders1962partitioning}
J.~F. Benders.
\newblock Partitioning procedures for solving mixed-variables programming
  problems.
\newblock \emph{Numerische Mathematik}, 4\penalty0 (1):\penalty0 238--252,
  1962.

\bibitem[Briant et~al.(2008)Briant, Lemar{\'{e}}chal, Meurdesoif, Michel,
  Perrot, and Vanderbeck]{Briant2008Comparison}
O.~Briant, C.~Lemar{\'{e}}chal, P.~Meurdesoif, S.~Michel, N.~Perrot, and
  F.~Vanderbeck.
\newblock Comparison of bundle and classical column generation.
\newblock \emph{Math. Program.}, 113\penalty0 (2):\penalty0 299--344, 2008.
\newblock \doi{10.1007/s10107-006-0079-z}.

\bibitem[Contreras et~al.(2011)Contreras, Cordeau, and
  Laporte]{Contreras2011Benders}
I.~A. Contreras, J.~Cordeau, and G.~Laporte.
\newblock Benders decomposition for large-scale uncapacitated hub location.
\newblock \emph{Operations Research}, 59\penalty0 (6):\penalty0 1477--1490,
  2011.
\newblock \doi{10.1287/opre.1110.0965}.

\bibitem[Costa et~al.(2019)Costa, Contardo, and Desaulniers]{costa2019}
L.~Costa, C.~Contardo, and G.~Desaulniers.
\newblock Exact branch-price-and-cut algorithms for vehicle routing.
\newblock \emph{Transportation Science}, Forthcoming, 2019.

\bibitem[Diaz and Fern{\'a}ndez(2002)]{diaz2002branch}
J.~Diaz and E.~Fern{\'a}ndez.
\newblock A branch-and-price algorithm for the single source capacitated plant
  location problem.
\newblock \emph{Journal of the Operational Research Society}, 53\penalty0
  (7):\penalty0 728--740, 2002.

\bibitem[Du~Merle et~al.(1999)Du~Merle, Villeneuve, Desrosiers, and
  Hansen]{du1999stabilized}
O.~Du~Merle, D.~Villeneuve, J.~Desrosiers, and P.~Hansen.
\newblock Stabilized column generation.
\newblock \emph{Discrete Mathematics}, 194\penalty0 (1-3):\penalty0 229--237,
  1999.

\bibitem[Frangioni(2002)]{Frangioni2002Generalized}
A.~Frangioni.
\newblock Generalized bundle methods.
\newblock \emph{{SIAM} Journal on Optimization}, 13\penalty0 (1):\penalty0
  117--156, 2002.
\newblock \doi{10.1137/S1052623498342186}.

\bibitem[Gadegaard et~al.(2018)Gadegaard, Klose, and
  Nielsen]{Gadegaard2018improved}
S.~L. Gadegaard, A.~Klose, and L.~R. Nielsen.
\newblock An improved cut-and-solve algorithm for the single-source capacitated
  facility location problem.
\newblock \emph{{EURO} J. Computational Optimization}, 6\penalty0 (1):\penalty0
  1--27, 2018.
\newblock \doi{10.1007/s13675-017-0084-4}.

\bibitem[Geoffrion(1974)]{geoffrion1974lagrangean}
A.~M. Geoffrion.
\newblock Lagrangean relaxation for integer programming.
\newblock In \emph{Approaches to integer programming}, pages 82--114. Springer,
  1974.

\bibitem[Gilmore and Gomory(1961)]{cuttingstock}
P.~Gilmore and R.~Gomory.
\newblock A linear programming approach to the cutting-stock problem.
\newblock \emph{Operations Research}, 9\penalty0 (6):\penalty0 849--859, 1961.

\bibitem[Gondzio and Gonz{\'{a}}lez{-}Brevis(2015)]{Gondzio2015new}
J.~Gondzio and P.~Gonz{\'{a}}lez{-}Brevis.
\newblock A new warmstarting strategy for the primal-dual column generation
  method.
\newblock \emph{Math. Program.}, 152\penalty0 (1-2):\penalty0 113--146, 2015.
\newblock \doi{10.1007/s10107-014-0779-8}.

\bibitem[Gondzio et~al.(2013)Gondzio, Gonz{\'{a}}lez{-}Brevis, and
  Munari]{Gondzio2013new}
J.~Gondzio, P.~Gonz{\'{a}}lez{-}Brevis, and P.~A. Munari.
\newblock New developments in the primal-dual column generation technique.
\newblock \emph{European Journal of Operational Research}, 224\penalty0
  (1):\penalty0 41--51, 2013.
\newblock \doi{10.1016/j.ejor.2012.07.024}.

\bibitem[Gondzio et~al.(2016)Gondzio, Gonz{\'{a}}lez{-}Brevis, and
  Munari]{Gondzio2016Large}
J.~Gondzio, P.~Gonz{\'{a}}lez{-}Brevis, and P.~A. Munari.
\newblock Large-scale optimization with the primal-dual column generation
  method.
\newblock \emph{Math. Program. Comput.}, 8\penalty0 (1):\penalty0 47--82, 2016.
\newblock \doi{10.1007/s12532-015-0090-6}.

\bibitem[Gschwind and Irnich(2016)]{Gschwind2016Dual}
T.~Gschwind and S.~Irnich.
\newblock Dual inequalities for stabilized column generation revisited.
\newblock \emph{{INFORMS} Journal on Computing}, 28\penalty0 (1):\penalty0
  175--194, 2016.
\newblock \doi{10.1287/ijoc.2015.0670}.

\bibitem[Gschwind and Irnich(2017)]{Gschwind2017Stabilized}
T.~Gschwind and S.~Irnich.
\newblock Stabilized column generation for the temporal knapsack problem using
  dual-optimal inequalities.
\newblock \emph{{OR} Spectrum}, 39\penalty0 (2):\penalty0 541--556, 2017.
\newblock \doi{10.1007/s00291-016-0463-x}.

\bibitem[Holmberg et~al.(1999)Holmberg, R{\"o}nnqvist, and
  Yuan]{holmberg1999exact}
K.~Holmberg, M.~R{\"o}nnqvist, and D.~Yuan.
\newblock An exact algorithm for the capacitated facility location problems
  with single sourcing.
\newblock \emph{European Journal of Operational Research}, 113\penalty0
  (3):\penalty0 544--559, 1999.

\bibitem[Jepsen et~al.(2008)Jepsen, Petersen, Spoorendonk, and
  Pisinger]{jepsen2008subset}
M.~Jepsen, B.~Petersen, S.~Spoorendonk, and D.~Pisinger.
\newblock Subset-row inequalities applied to the vehicle-routing problem with
  time windows.
\newblock \emph{Operations Research}, 56\penalty0 (2):\penalty0 497--511, 2008.

\bibitem[Keuper et~al.(2019)Keuper, Singh, and Yarkony]{keuper2019massively}
M.~Keuper, M.~Singh, and J.~Yarkony.
\newblock Massively parallel benders decomposition for correlation clustering.
\newblock \emph{arXiv preprint arXiv:1902.05659}, 2019.

\bibitem[Leal-Taixe et~al.(2012)Leal-Taixe, Pons-Moll, and
  Rosenhahn]{branchimportant}
L.~Leal-Taixe, G.~Pons-Moll, and B.~Rosenhahn.
\newblock Branch-and-price global optimization for multi-view multi-target
  tracking.
\newblock In \emph{Proc. 25th Conference on Computer Vision and Pattern
  Recognition}, pages 1987--1994, Providence, Rhode Island, 2012.

\bibitem[Lokhande et~al.(2020)Lokhande, Wang, Singh, and
  Yarkony]{FlexDOIArticle}
V.~S. Lokhande, S.~Wang, M.~Singh, and J.~Yarkony.
\newblock Accelerating column generation via flexible dual optimal inequalities
  with application to entity resolution, 2020.

\bibitem[L{\"u}bbecke and Desrosiers(2005)]{lubbecke2005selected}
M.~E. L{\"u}bbecke and J.~Desrosiers.
\newblock Selected topics in column generation.
\newblock \emph{Operations Research}, 53\penalty0 (6):\penalty0 1007--1023,
  2005.

\bibitem[Magnanti and Wong(1981)]{magnanti1981accelerating}
T.~L. Magnanti and R.~T. Wong.
\newblock Accelerating {B}enders decomposition: Algorithmic enhancement and
  model selection criteria.
\newblock \emph{Operations Research}, 29\penalty0 (3):\penalty0 464--484, 1981.

\bibitem[Marsten et~al.(1975)Marsten, Hogan, and
  Blankenship]{marsten1975boxstep}
R.~E. Marsten, W.~Hogan, and J.~W. Blankenship.
\newblock The boxstep method for large-scale optimization.
\newblock \emph{Operations Research}, 23\penalty0 (3):\penalty0 389--405, 1975.

\bibitem[Mitchell(1994)]{Mitchell1994Interior}
J.~E. Mitchell.
\newblock An interior point column generation method for linear programming
  using shifted barriers.
\newblock \emph{{SIAM} Journal on Optimization}, 4\penalty0 (2):\penalty0
  423--440, 1994.
\newblock \doi{10.1137/0804023}.

\bibitem[Pessoa et~al.(2018)Pessoa, Sadykov, Uchoa, and
  Vanderbeck]{Pessoa2018Automation}
A.~A. Pessoa, R.~Sadykov, E.~Uchoa, and F.~Vanderbeck.
\newblock Automation and combination of linear-programming based stabilization
  techniques in column generation.
\newblock \emph{{INFORMS} Journal on Computing}, 30\penalty0 (2):\penalty0
  339--360, 2018.
\newblock \doi{10.1287/ijoc.2017.0784}.

\bibitem[Rousseau et~al.(2007)Rousseau, Gendreau, and
  Feillet]{rousseau2007interior}
L.-M. Rousseau, M.~Gendreau, and D.~Feillet.
\newblock Interior point stabilization for column generation.
\newblock \emph{Operations Research Letters}, 35\penalty0 (5):\penalty0
  660--668, 2007.

\bibitem[van Ackooij and Frangioni(2018)]{Ackooij2018Incremental}
W.~van Ackooij and A.~Frangioni.
\newblock Incremental bundle methods using upper models.
\newblock \emph{{SIAM} Journal on Optimization}, 28\penalty0 (1):\penalty0
  379--410, 2018.
\newblock \doi{10.1137/16M1089897}.

\bibitem[Wang et~al.(2017)Wang, Wolf, Fowlkes, and Yarkony]{wang2017tracking}
S.~Wang, S.~Wolf, C.~Fowlkes, and J.~Yarkony.
\newblock Tracking objects with higher order interactions via delayed column
  generation.
\newblock In \emph{Proc. 20th International Conference on Artificial
  Intelligence and Statistics}, pages 1132--1140, Fort Lauderdale, Florida,
  2017.

\bibitem[Wang et~al.(2018)Wang, Ihler, Kording, and
  Yarkony]{wang2018accelerating}
S.~Wang, A.~Ihler, K.~Kording, and J.~Yarkony.
\newblock Accelerating dynamic programs via nested benders decomposition with
  application to multi-person pose estimation.
\newblock In \emph{Proc. 15th European Conference on Computer Vision}, pages
  652--666, Munich, Germany, 2018.

\bibitem[Yang et~al.(2012)Yang, Chu, and Chen]{yang2012cut}
Z.~Yang, F.~Chu, and H.~Chen.
\newblock A cut-and-solve based algorithm for the single-source capacitated
  facility location problem.
\newblock \emph{European Journal of Operational Research}, 221\penalty0
  (3):\penalty0 521--532, 2012.

\bibitem[Yarkony and Fowlkes(2015)]{HPlanarCC}
J.~Yarkony and C.~Fowlkes.
\newblock Planar ultrametrics for image segmentation.
\newblock In \emph{Proc. 28th Advances in Neural Information Processing
  Systems}, pages 64--72, Montreal, Quebec, 2015.

\bibitem[Yarkony et~al.(2012)Yarkony, Ihler, and Fowlkes]{PlanarCC}
J.~Yarkony, A.~Ihler, and C.~Fowlkes.
\newblock Fast planar correlation clustering for image segmentation.
\newblock In \emph{Proc. 12th European Conference on Computer Vision}, pages
  1169--1176, Florence, Italy, 2012.

\bibitem[Yarkony et~al.(2019)Yarkony, Adulyasak, Singh, and
  Desaulniers]{mwspJournal}
J.~Yarkony, Y.~Adulyasak, M.~Singh, and G.~Desaulniers.
\newblock Data association via set packing forcomputer vision applications.
\newblock In \emph{Les Cahiers du GERAD}, Montreal, Quebec, 2019.

\bibitem[Zetina et~al.(2019)Zetina, Contreras, and Cordeau]{Zetina2019Exact}
C.~A. Zetina, I.~A. Contreras, and J.~Cordeau.
\newblock Exact algorithms based on benders decomposition for multicommodity
  uncapacitated fixed-charge network design.
\newblock \emph{Computers {\&} {OR}}, 111:\penalty0 311--324, 2019.
\newblock \doi{10.1016/j.cor.2019.07.007}.

\end{thebibliography}
\end{document}